\author{Grzegorz Tomkowicz}

\documentclass[12pt, leqno]{article}

\usepackage{amsmath,amsthm}

\usepackage{amssymb,latexsym}

\usepackage{enumerate}
\usepackage[T1]{fontenc}

\newtheorem{thm}{Theorem}[section]

\newtheorem{cor}[thm]{Corollary}

\newtheorem{lem}[thm]{Lemma}

\theoremstyle{definition}

\newtheorem{defin}[thm]{Definition}

\begin{document}

\baselineskip=17pt

\title{On some geometrization of compact metric spaces: A solution to the Banach-Ulam conjecture}
\maketitle

\renewcommand{\thefootnote}{}

\footnote{2020 \emph{Mathematics Subject Classification}: Primary 03E05, 28C10 46N30; Secondary 54A25, 54E45.}

\footnote{\emph{Key words and phrases}:Banach-Ulam conjecture, Banach-Hausdorf-Tarski paradox, Marczewski problem,
 Talagrand's problem, Ulam conjecture}

\renewcommand{\thefootnote}{\arabic{footnote}}

\setcounter{footnote}{0}

\begin{abstract} We propose a geometrization of compact metric spaces that is based on ideas of S. Banach and J. Mycielski. Then we prove the following conjecture of S. Banach and S. Ulam from 1935: in every compact metric space there exists a finitely additive probability measure, invariant under congruences. Moreover, our techniques allow us to solve a problem of M. Talagrand related to the Marczewski problem and the Banach-Tarski paradox with pieces having the property of Baire. We give also a very simple proof of the conjecture of Ulam about the
product Lebesgue measure in the Hilbert cube and explain the existing results about congruence-invariant Borel measures in the language of our geometrization.       
 
\end{abstract}
 
\section{Introduction} In 1935 S. Banach and S. Ulam posed in [12] the following problem:

\vspace{1cm}
\emph{Does for any compact metric space $X$ exist a finitely additive probability Borel measure $m$ that is congruence-invariant?}
	\vspace{1cm}
    
    Recall that congruences are isometries defined on subsets of $X$ that do not necessarily extend to the whole space. The first
		 step toward solving this problem was made by Banach.
		 Using the Banach-Mazur limits, Banach provided in [2] a construction of the Haar measure, and his construction uses coverings of compact
		 subsets to define a finitely additive congruence-invariant measure on compact subsets of locally compact topological groups.
		 Later. J. Mycielski [10] generalized Banach's method to obtain a finitely additive congruence-invariant measure defined on all
		 compact subsets of any compact metric space. Mycielski  obtained also countably additive measures that, as proved in [3], 
		 are congruence-invariant in the
		 spaces where a homogeneity appears. However, there exist compact metric spaces where there is no countably additive congruence-invariant
 probability measure, e.g. countable compact metric spaces. In the case of countable compact metric spaces, the 
		 Banach-Ulam conjecture was confirmed by R. Davies and A Ostaszewski [4].\\
		   \indent In the present paper, we solve the Banach-Ulam problem in affirmative. In our constructions we develop the idea of Banach
			 and link it with a theorem of Tarski that equates the existence of measures with the lack of paradoxical decompositions. And our idea
			is based on the \emph{geometrization} of compact metric spaces. Recall, that Klein's program is about interpreting geometry of a
			 space $X$ as a set of invariants with respect to the group acting on $X$. In the same spirit, we introduce in any compact metric space 
			 the objects that are congruence-invariant. And our invariants are based on the finitely additive measures defined on compact subsets
				constructed by Banach and Mycielski. Our geometrization program has numerous consequences and explains many results
				 about finitely additive and countably additive congruence-invariant measures.\\
				   \indent The most interesting connection between out geometrization and paradoxical decompositions and invariant measures is
				    the Marczewski problem. It asks if there exists
								a paradoxical decomposition of the Euclidean unit sphere $\mathbb{S}^{n-1}$ or the Euclidean cube $[0,1]^n$ for 
								 $n \geq 3$ with the pieces having the property of Baire. R. Dougherty and M. Foreman [5] proved that indeed such
								 a decomposition is possible. However, in the same paper (p. 123) M. Talagrand asked if the paradoxical decomposition
								 is still possible with the pieces $A_i$ such that for any closed subset $Y$ of the aforementioned spaces,
								 $Y \cap A_i$ has the property of Baire.
								   It appears that Banach-Ulam problem and Talagrand problem generalize to a very general problem about exsistence of
									finitely additive congruence-invariant measures in compact metric spaces. And our Theorem 4.2 solves the general problem,
									 in particular we show that there is no paradoxical decomposition of the $\mathbb{S}^{n-1}$ or the Euclidean 
									 cube $[0,1]^n$ with the pieces having the stronger property of Baire as demanded by Talagrand. What is interesting
									 is that this kind of pieces eliminates the \emph{telescoping property} of the pieces apearing in classical Banach-Tarski
									 decompositions. Namely, comeager parts of the pieces belong to the domains of congruences transporting the pieces to 
									 the destination points, and this phenomenon eliminates any kind of paradoxical decomposition.\\
									  \indent We provide also a link between our geometrization and the existence of countably additive 
										congruence-invariant measures (Thms. 5.1 and 5.2) that allows us to give a very concise solution the problem of Ulam
										 about the Lebesgue product measure in the Hilbert cube (Thm. 6.1).

\section{Preliminaries} In this section we will gather all the notions and known results needed to proceed with our geometrization
 program. We begin with the
 notion of Banach-Mycielski content defined first by Banach in [2] and then refined by Mycielski [10]. In order to define the 
 content consider the interval $[0, \infty]$ equipped with its natural compact topology. Moreover, let $\mathcal{F}$ be a non-principial
 ultrafilter defined on the set of non-negative integers $\mathbb{N}$. Now, for every sequence $(x_n)$ with $x_n \in [0, \infty]$
 we define $\lim_{n \to \mathcal{F}} x_n$ to be the unique number $x \in [0, \infty]$ such that every neighbourhood $U$ of $x$
 has the property that
   $\{n \in \mathbb{N}: x_n \in U \} \in \mathcal{F}$. The limit (which may be regarded as the Banach-Mazur limit, see [1], p. 34 or [9])
	 is a conservative extension of the ordinary limit and has their usual properties.\\
	  \indent We will need also the notion of \emph{entropy} $E(C,t)$ defined in any compact metric space $X$: 
		
		$$E(C,t) = min \{ card \mathcal{K}: \mathcal{K} \ \textrm{ is a covering of} \ C, diam(\mathcal{K}) < t\},$$
     where $C$ is a compact subset of $X$, $t \in (0,1]$ and $diam(\mathcal{K})$ means that all the sets in $\mathcal{K}$
		 have the diametrers less than $t$.\\
		  \indent Now, the Banach-Mycielski content $\lambda$, defined for any compact subset $K$ of any compact metric space 
			 is the following limit:
			
			  $$ \lambda(K) = \lim_{n \to \mathcal{F}} \frac{E(K, \frac{1}{n})}{E(X, \frac{1}{n})}.$$
			
				The content has the following properties described explicitely in [10], Lemma A:
				
			\begin{tabbing}
$(a)$ \= \hspace{1.5cm} \= $\lambda(X) = 1$; \\[1.5mm]
$(b)$ \> \> $\lambda(A) \leq \lambda(B)$ if $A \subset B$; \\[1.5mm]
$(c)$ \> \> $\lambda(A \cup B) \leq \lambda(A) + \lambda(B)$; \\[1.5mm]
$(d)$ \> \> $\lambda(A \cup B) = \lambda(A) + \lambda(B)$ if $A \cap B = \emptyset$; \\[1.5mm]
$(e)$ \> \> $\lambda(A) = \lambda(\sigma(A))$, where $\sigma$ is a congruence defined on $A$.
\end{tabbing}
				
				  Developing the ideas of Banach [2], Mycielski [10] constructs a countably additive probability measure $\mu$ in any compact
					metric space $X$ such that $\mu(U) = \mu(V)$ for congruent open sets. Here we construct first the auxilary function on open sets,
					$\mu_0(U) = sup \{\lambda(K) : K \subset U, K = \overline{K}\}$, then the outer measure 
					 $\mu^{*}(A) = inf \{\mu_0(U): A \subset U, U = Int U\}$. And finally we apply the Carath\' eodory theorem to get the
					 a probability Borel measure in $X$ we will call \emph{Banach-Mycielski measure}.
					 The measure in general is not congruence-invariant, e.g. countable compact metric spaces do not bear any such measure.
					
					\indent The next notion we will apply in our constructions is the Tarski's \emph{semigroup of equidecomposability types} described
					extensively in [14], Chapter 10 in the case of group actions and in [13], p.233 in the case of congruences in compact 
					metric spaces (see also remarks after Cor. 11.3 in [14]). So let $X$ be a compact metric space and let $\mathcal{C}$ be the set
					 of all congruences defined on the subsets of $X$. Define now the set $X^{*} = X \times \mathbb{N}$ and say that 
					 $(\sigma, \pi)(x,n) = (\sigma(x), \pi(x))$ is a \emph{Tarski's operation}, 
					 where $\sigma \in \mathcal{C}$ and $\pi$ is a permutation of $\mathbb{N}$. Let $A \subset X^{*}$, we will call
					those $n \in \mathbb{N}$ for which there exists at least one element of $A$ with the second coordinate $n$, \emph{the levels
					 of} $A$.\\
				    \indent We will say that two sets $A$ and $B$ of compact metric space $X$ are \emph{equidecomposable} if there exists a 
						finite partition $\{A_i\}_{i=1}^k$ of $A$ and the congruences $\sigma_1,...,\sigma_k$ such that $\{\sigma_(A_i)\}_{i=1}^k$
						is a partition of $B$. And $A^{*}, B^{*} \subset X^{*}$ are \emph{Tarski equidecomposable} if instead of $\sigma_i$ we use
						 the Tarski's operations.\\
						   \indent Consider now the subsets of $X^{*}$ that have only finitely many levels and call them \emph{bounded}, 
							then for any such set $A$ we can define
							 the equivalence class $[A]$ with respect to the Tarski equidecomposability. Denote the collection of all types 
							 of bounded sets by $\mathcal{S}$ and observe that for any $[A], [B] \in \mathcal{S}$ we can define the addition $+$ by
							 $[A] + [B] = [A \cup B']$, where $B' = \{(b, m +k): (b,m) \in B\}$, where $k$ is large enough that $A$ and
							 $B'$ are disjoint. It is easy to verify that the addition is well-defined and thus we will call the set $\mathcal{S}$
							  the \emph{type semigroup}. In $\mathcal{S}$ we can intoduce also the multiplication $n \cdot \alpha := \alpha+ ...+ \alpha$
								with $n$ summands; and the order $\leq$ by $\alpha \leq \beta$ if and only if $\alpha + \gamma = \beta$ for some 
								 $\gamma \in \mathcal{S}$.\\
								 \indent Let $X$ be a compact metric space and let $\mathcal{A}$ be a $\sigma$-algebra of subsets of $X$, that is 
								congruence-invariant, that is $\sigma(A) \in A$, whenever $A \in \mathcal{A}$ and $\sigma$ is a congruence defined on $A$.
								 Then we define the type semigroup for $\mathcal{A}$, denoted by $\mathcal{S}(\mathcal{A})$ as the set of all elements of
								 the form:
								
								 $$\mathcal{A}^{*}= \{A \subset X^{*}: \ \textrm{for some} \ n \in \mathbb{N}, A = \bigcup_{m <n} A_m \times \{m\},
								 A_m \in \mathcal{A}\}.   $$
								  
									Then we define the equivalence classes $[A]$ of elements in $\mathcal{A}^{*}$ using the Tarski equidecomposability
									 with pieces restricted to $\mathcal{A}^{*}$. Finally, we arrive to the fundamental theorem of Tarski
									 (see [14], Thm. 3.6 and Thm. 11.1 for a proof):
									
									\begin{thm} Let $X$ be a compact metric space and let $\mathcal{A}$ be a congruence-invariant $\sigma$-algebra
										 of subsets of $X$ then the following conditions are equivalent:
										 
										  $(i)$ \ \ \ \ \  There is no $n \in \mathbb{N}$ with $(n+1) \cdot [X] = n \cdot [X]$ in
											$\mathcal{S}(\mathcal{A})$;  
											
											$(ii)$ \ \ \ \ \  There is a finitely additive congruence-invariant measure\\
											  $m: \mathcal{A} \rightarrow  [0, \infty]$
											 with $m(X) = 1$.
											    \end{thm}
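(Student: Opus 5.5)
The direction (ii)$\Rightarrow$(i) is the easy one. Given a finitely additive congruence-invariant $m$ on $\mathcal{A}$ with $m(X)=1$, I extend it to $\mathcal{A}^{*}$ by setting $m^{*}(A)=\sum_{m'<n} m(A_{m'})$ for $A=\bigcup_{m'<n}A_{m'}\times\{m'\}$. Tarski operations $(\sigma,\pi)$ only move pieces congruently and relabel levels. So if $A$ and $B$ are Tarski equidecomposable with pieces in $\mathcal{A}^{*}$, finite additivity together with congruence invariance gives $m^{*}(A)=m^{*}(B)$. One point needs care: a piece $P\subset A$ may lie on several levels, and the congruence acts on each level's slice separately. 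Each slice lies in $\mathcal{A}$, and its image lies in $\mathcal{A}$ by congruence invariance of $\mathcal{A}$, so the bookkeeping works. Hence $m^{*}$ induces a well-defined additive homomorphism $\mathcal{S}(\mathcal{A})\to[0,\infty]$ sending $[X]$ to $1$. If $(n+1)[X]=n[X]$, applying this homomorphism gives $n+1=n$, which is impossible.

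For (i)$\Rightarrow$(ii) I follow Tarski's argument via the abstract semigroup theorem, as in [14], Thm. 11.1 for the algebra-restricted version. First I check that $\mathcal{S}(\mathcal{A})$ is a commutative semigroup with identity $[\emptyset]$ in which the relation $\leq$ behaves as needed. Next I verify the two structural facts the abstract theorem uses.

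The first is the Banach--Schröder--Bernstein property: if $\alpha\le\beta$ and $\beta\le\alpha$, then $\alpha=\beta$. I would prove it with the usual ping-pong construction inside $\mathcal{A}^{*}$. The sets produced by iterating finitely many congruences on pieces, and taking countable unions and differences, stay in $\mathcal{A}$. This is exactly where $\mathcal{A}$ being a $\sigma$-algebra and congruence-invariant is needed: the "chain" sets are countable unions of images of members of $\mathcal{A}$.

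The second is cancellation of multiples: $n\alpha\le n\beta$ implies $\alpha\le\beta$. The proof uses a König-type matching argument on a bipartite graph whose edges come from the finitely many pieces. The resulting pieces are finite Boolean combinations of preimages and images under the given congruences, so they again lie in $\mathcal{A}$.

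With these in hand I apply Tarski's theorem on commutative semigroups: if $\varepsilon\in\mathcal{S}$ satisfies $(n+1)\varepsilon\ne n\varepsilon$ for all $n$, then there is an additive $\nu:\mathcal{S}\to[0,\infty]$ with $\nu(\varepsilon)=1$. That theorem is proved by a Hahn--Banach or Zorn argument. I take $\varepsilon=[X]$ and define $m(A)=\nu([A\times\{0\}])$ for $A\in\mathcal{A}$. Additivity of $\nu$ gives finite additivity of $m$, since disjoint $A,B$ satisfy $[(A\cup B)\times\{0\}]=[A\times\{0\}]+[B\times\{0\}]$ by shifting levels. Congruence invariance of $m$ holds because $A\times\{0\}$ and $\sigma(A)\times\{0\}$ are Tarski equidecomposable with a single piece.

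The main obstacle is the closure bookkeeping. I must verify that every set constructed in the Schröder--Bernstein and cancellation steps remains in $\mathcal{A}$. This is subtle here because congruences are partial maps, so the set of congruences is not a group. I would handle it by noting three facts: restrictions of congruences to members of $\mathcal{A}$ are again congruences, inverses of congruences are congruences, and compositions of congruences are congruences. Then every constructed set is a countable Boolean combination of images of $\mathcal{A}$-sets under such maps. Given these checks, the equivalence follows as in [14], Thm. 3.6 and Thm. 11.1.
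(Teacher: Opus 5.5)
Your proof is the standard Tarski argument (Banach--Schr\"oder--Bernstein plus Tarski's theorem on commutative semigroups) that the paper invokes by citing [14], Thm.~3.6 and Thm.~11.1, without giving a proof. The following parts are fine:
the easy direction;
the ping-pong closure check, which is where the $\sigma$-algebra and congruence-invariance hypotheses are actually used;
the passage from $\nu$ to $m(A)=\nu([A\times\{0\}])$.

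Two corrections are needed.

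\textbf{The semigroup theorem as you quote it is false.} In an arbitrary commutative semigroup the hypothesis $(n+1)\varepsilon\neq n\varepsilon$ is not enough. In $(\mathbb{Z},+)$ with $\varepsilon=1$ there is no additive $\nu:\mathbb{Z}\to[0,\infty]$ with $\nu(1)=1$: the relation $\nu(1)=\nu(1)+\nu(0)$ forces $\nu(0)=0$, and then $\nu(-1)=-1$. The correct hypothesis is $(n+1)\varepsilon\not\le n\varepsilon$ for all $n$.

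Banach--Schr\"oder--Bernstein in $\mathcal{S}(\mathcal{A})$ is exactly the bridge. Since $n[X]\le(n+1)[X]$ always holds, $(n+1)[X]\le n[X]$ would give $(n+1)[X]=n[X]$, contradicting (i). You should state this step explicitly, since it is the only place BSB enters.

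\textbf{The cancellation step is unnecessary and its justification is wrong.} The law $n\alpha\le n\beta\Rightarrow\alpha\le\beta$ plays no role in this statement. Your argument for it inside $\mathcal{S}(\mathcal{A})$ also fails. The perfect matching supplied by K\"onig's theorem is chosen separately on each (generally infinite) connected component. So the resulting pieces are not finite Boolean combinations of images of the original pieces, and there is no reason for them to belong to $\mathcal{A}$. Condition (i) is already phrased in terms of $(n+1)[X]$ and $n[X]$, so you never need to reduce to $2[X]=[X]$; delete that step.
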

													
											\indent Let $X$ be a compact metric space, we will say that a set $A \subset X$ has the property of Baire if 
											it can be represented as the symmetric difference $U \triangle M$, where $U$ is an open and $M$ is a meager
											subset of $X$. It is well-known that for every set $A$ in compact metric space that has the property of Baire
											 there exists a unique regular open set $V$ such that $A = V \triangle M$ and $M$ is a meager subset of $X$.
											  We will say that a set $A \subset X$ is \emph{regular} if for every closed $F \subset X$, $F \cap A$ has the 
												property of Baire. It is easy to verfy that regular sets form a $\sigma$-algebra containing Borel sets that
												is congruence-invariant. We call this algebra \emph{regular} and denote it by $Reg(X)$. Moreover, we call
												any partition into regular sets of any Borel subset of $X$, a \emph{regular partition}.\\
												 \indent In what follows we use the standard topological notation, that is for a set $A$, we denote by
												 $IntA$ its interior, by $\overline{A}$ its closure and by $\partial{A}$ its boundary.
                         
												  \indent We will need also the following results from descriptive set theory that links our constructions
													with the Cantor-Bendixson theorem:

\begin{lem} Let $(A_{\alpha})$ be a decreasing transfinite sequence of $F_{\sigma}$ sets of a compact metric space $X$ such that
 $A_{\alpha} = \bigcup_{n=1}^{\infty} F^{\alpha}_n$, where $F^{\alpha}_{n}$ is a closed subset of $X$, 
 $F^{\alpha}_{n} \subset F^{\alpha}_{n+1}$ and $F^{\alpha+1}_{n}$ is a nowhere dense subset of $F^{\alpha}_{n}$.
 Then there exists a countable ordinal $\gamma$ with $A_{\gamma} = \emptyset$. \end{lem}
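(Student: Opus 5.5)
The plan is a countability argument via a countable base, not Baire category. First I fix the reading of the hypotheses. The family $(F^{\alpha}_n)$ is indexed by all ordinals $\alpha$ for which the sequence is defined. For every $n$ the sets $F^{\alpha}_n$ are decreasing in $\alpha$: at successor steps this follows because ``$F^{\alpha+1}_n$ is a nowhere dense subset of $F^{\alpha}_n$'' includes $F^{\alpha+1}_n\subset F^{\alpha}_n$. At limit stages I will assume the natural $F^{\lambda}_n\subset F^{\alpha}_n$ for $\alpha<\lambda$ (e.g. $F^\lambda_n=\bigcap_{\alpha<\lambda}F^\alpha_n$), which is what ``decreasing'' is meant to encode. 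The goal is then to show that only countably many $\alpha$ can have $A_\alpha\neq\emptyset$. Since the sequence is decreasing, this yields a countable $\gamma$ with $A_\gamma=\emptyset$.

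Fix a countable base $\{B_k\}_{k\in\mathbb{N}}$ of the compact metric space $X$. For each $\alpha$ with $A_\alpha\neq\emptyset$, choose $n=n(\alpha)$ with $F^{\alpha}_n\neq\emptyset$; this is possible since $A_\alpha=\bigcup_n F^\alpha_n$. Because $F^{\alpha+1}_n$ is closed and nowhere dense in the nonempty space $F^{\alpha}_n$, the set $F^{\alpha}_n\setminus F^{\alpha+1}_n$ is a nonempty relatively open subset of $F^\alpha_n$. So there is a basic set $B_k$, $k=k(\alpha)$, with
$$B_k\cap F^{\alpha}_n\neq\emptyset,\qquad B_k\cap F^{\alpha+1}_n=\emptyset .$$
This defines a map $\alpha\mapsto (n(\alpha),k(\alpha))\in\mathbb{N}\times\mathbb{N}$.

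The key step is injectivity. Suppose $\alpha<\beta$ both receive the same pair $(n,k)$. Then $\beta\geq\alpha+1$, so by monotonicity in the upper index $F^{\beta}_n\subset F^{\alpha+1}_n$, whence $B_k\cap F^{\beta}_n=\emptyset$. This contradicts the choice made at $\beta$, which requires $B_k\cap F^{\beta}_n\neq\emptyset$. Hence the set of $\alpha$ with $A_\alpha\neq\emptyset$ injects into $\mathbb{N}\times\mathbb{N}$ and is countable. Being an initial segment of the ordinals (because the sequence decreases), it equals some countable ordinal $\gamma$, and then $A_\gamma=\emptyset$.

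The main obstacle is not the counting but making sure the hypotheses really give $F^\beta_n\subset F^{\alpha+1}_n$ for all $\beta>\alpha$ at the \emph{same} index $n$. If one only knew $A_\beta\subset A_\alpha$, a direct attempt via Baire category on $F^\alpha_n$ runs into trouble: points removed from $F^\alpha_n$ could reappear in $F^{\alpha+1}_m$ for larger $m$. So I would state explicitly, as part of the reading of the lemma, that the closed approximants decrease levelwise (including at limits). With that in hand, the injection argument above goes through verbatim.
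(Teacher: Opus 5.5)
Your proof is correct and is essentially the paper's argument. The paper applies the fact that a strictly decreasing transfinite sequence of closed sets in a second countable space has countable length ([8], Thm. 6.9) to each level $n$ separately and then takes $\gamma=\sup_n\gamma_n$, whereas you inline the countable-base proof of that fact and handle all levels at once via the injection $\alpha\mapsto(n(\alpha),k(\alpha))$; the levelwise monotonicity $F^\beta_n\subset F^\alpha_n$ (including at limit stages) that you make explicit is used tacitly by the paper as well, both for the strict decrease and for $F^\gamma_n\subset F^{\gamma_n}_n=\emptyset$.
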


 \emph{Proof.} Fix a positive integer $m$ and observe that since $F^{\alpha +1}_m$ is nowhere dense in $F^{\alpha}_m$, then we 
 obtain a strictly decreasing transfinite sequence $(F^{\alpha}_m)$ of closed subsets of $X$. Therefore, there exists a countable
 ordinal $\gamma_m$ with $F^{\gamma_m}_m = \emptyset$ (see [8], Thm. 6.9). Since $A_{\alpha} = \bigcup_{n=1}^{\infty} F^{\alpha}_n$
 we obtain in this way countably many countable ordinals $\gamma_n$. Now, it is enough to take 
  $\gamma = \sup \{\gamma_n: n = 1,2,...\}$. $\Box$
	
	In our proof of Banach-Ulam conjecture and the Talagrand's problem we encounter a more general situation than in the case of 
	our above lemma:
	
	\begin{thm} Let $X$ be a compact metric space and let $(A_{\alpha})$ be a transfinite sequence of 
	subsets of $X$ such that $A_{0} = \bigcup_{n=1}^{\infty} F^{0}_n$, where $F^{0}_n$ is nowhere dense and closed, 
	 $A_{\alpha + 1} = \bigcup_{n=1}^{\infty} F^{\alpha+1}_n$, where $F^{\alpha+1}_n$ is a meager $F_{\sigma}$ subset of $F^{\alpha}_n$ and
		$A_{\alpha} = \bigcap_{\beta < \alpha} A_{\beta}$ for the limit ordinals $\alpha$. Then there exists
		 a countable ordinal $\gamma$ such that $A_{\gamma} = \emptyset$. \end{thm}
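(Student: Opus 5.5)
The plan is to reduce the statement to the situation of Lemma 2.2, and ultimately to the fact (Thm. 6.9 of [8]) that a strictly decreasing transfinite sequence of closed subsets of a compact metric space is countable. The difference from Lemma 2.2 is that the pieces $F^{\alpha+1}_n$ are now only meager $F_\sigma$ sets, not closed nowhere dense sets. So the first step is to refine every level into closed pieces. For $\alpha=0$ this is already done, since the $F^0_n$ are closed and nowhere dense. Suppose that at stage $\alpha$ every $F^{\alpha}_n$ has been written as a countable union of closed sets $C^{\alpha}_{s}$, indexed by finite sequences $s$ of naturals. Since $F^{\alpha+1}_n$ is a meager $F_\sigma$ subset of $F^\alpha_n$, I write it as a countable union of closed sets. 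Intersecting each of these with the closed pieces $C^\alpha_s$ of $F^\alpha_n$ gives closed sets $C^{\alpha+1}_{s^\frown k}\subset C^{\alpha}_{s}$. By the Baire category theorem applied in the compact space $C^\alpha_s$, these can be chosen nowhere dense in $C^\alpha_s$: a closed subset of a meager set is nowhere dense.

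This produces a countably branching tree $T$ of closed sets ordered by reverse inclusion. Along each branch, each set is a nowhere dense closed subset of its predecessor, so in particular it is a proper subset whenever the predecessor is nonempty. At a limit ordinal $\alpha$ I define the pieces as intersections along branches, matching $A_\alpha=\bigcap_{\beta<\alpha}A_\beta$. A point $x\in A_\gamma$ then determines, by choosing for each $\beta<\gamma$ a piece containing $x$ coherently, a chain of closed sets of length $\gamma$ that is strictly decreasing at successor steps. By [8], Thm. 6.9, every such chain has countable length. So each individual point survives only countably many stages. Denote by $\rho(x)$ the least ordinal $\beta$ with $x\notin A_\beta$; this exists and is countable.

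The key step, and the main obstacle, is to pass from ``$\rho(x)<\omega_1$ for each $x$'' to a uniform countable bound $\sup_x\rho(x)<\omega_1$. The tree has countably many nodes at each finite level, but along transfinite branches there may be uncountably many branches, so the simple supremum over countably many $\gamma_n$ used in Lemma 2.2 is not available directly. My first attempt is to define a Cantor--Bendixson-type rank on the closed sets of $T$ by transfinite recursion: $r(C)=\sup\{r(C')+1: C' \text{ a child of } C\}$, taking limits via intersections. I would then show that $r(C)<\omega_1$ for every node, using separability of the hyperspace $K(X)$ of compact subsets of $X$. The point is that a strictly $\subset$-decreasing family of closed sets is countable because each set differs from the next by some basic open set from a fixed countable base. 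This is exactly the argument behind [8], Thm. 6.9, and I expect it to adapt to the well-founded tree once each node is coded by the countable base.

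The countably many root pieces $F^0_n$ then have countable ranks $\gamma_n$. Taking $\gamma=\sup_n\gamma_n+1$, as in the proof of Lemma 2.2, gives $A_\gamma=\emptyset$. If the rank argument fails to give countability at limit stages, the fallback is a Baire-category argument. Suppose $A_\gamma\neq\emptyset$ for all $\gamma<\omega_1$. The closures $\overline{A_\gamma}$ form a decreasing sequence of closed sets, so they stabilize at some countable stage, say equal to $K$. I would then derive a contradiction with $A_{\gamma+1}$ being meager in the pieces of $A_\gamma$ relative to $K$.
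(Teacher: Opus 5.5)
The gap is in your refinement step. The principle ``a closed subset of a meager set is nowhere dense'' only applies when the closed set is meager in the compact space where nowhere density is measured, here $C^\alpha_s$. That is fine at $\alpha=0$, where $F^0_n$ is closed. From $\alpha=1$ on, however, the hypothesis only says that $F^{\alpha+1}_n$ is meager in $F^\alpha_n$. That set is merely $F_\sigma$ and need not be a Baire space, and meagerness there does not pass to a closed piece $C^\alpha_s$ that is itself small in $F^\alpha_n$.

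This cannot be repaired, because the statement as written is false. In $X=[0,1]$ let $F^0_1$ be the Cantor set $C$, let $D$ be the countable set of endpoints of $C$, put $F^\alpha_1=D$ for every $\alpha\ge 1$ (limits included), and put $F^\alpha_n=\emptyset$ for $n\ge 2$. Since $D$ is countable without isolated points, it is an $F_\sigma$ set that is meager in $C$ and also meager in itself. So all hypotheses hold, yet $A_\alpha=D\neq\emptyset$ for every $\alpha\ge 1$. In your scheme each piece $C^1_s$ lies inside $F^2_1=D$, so its children cover it. By Baire's theorem in the compact set $C^1_s$, one of them has interior there, so no decomposition gives children that are nowhere dense in their parents. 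The same example defeats your fallback: $\overline{A_\alpha}=C$ for all $\alpha\ge 1$, and no contradiction arises. Measuring meagerness in $\overline{F^\alpha_n}$ changes nothing; measuring it in $X$, even the constant sequence $F^\alpha_1=\{0\}$ is a counterexample.

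What your argument (like Lemma 2.2) really needs is that each $F^{\alpha+1}_n$ be a countable union of closed sets, each nowhere dense in a closed piece of stage $\alpha$. The paper's proof assumes exactly this, without justification, when it ``continues inductively, using $M^\alpha_n$ instead of $F^0_n$, to define the closed nowhere dense subsets $N_{nj}$ of $M^\alpha_n$''. So it has the same hole in the same place.

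Granting that stronger hypothesis, your route differs from the paper's and has a second weak point. The paper builds no tree. It merges the countably many closed pieces of each stage diagonally into one increasing sequence $G^\alpha_1\subset G^\alpha_2\subset\cdots$. Then for each fixed $n$ the sets $G^\alpha_n$ form a single chain of closed sets, each nowhere dense in its predecessor, and Lemma 2.2 (a supremum of countably many countable ordinals) finishes. The obstacle you single out, uncountably many transfinite branches, never arises, and no rank argument in $K(X)$ is needed.

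Your claim that a point $x\in A_\gamma$ determines a coherent chain of pieces fails at limit stages. The pieces containing $x$ form a downward closed subtree with nodes on every finite level. But the tree is infinitely branching, so K\"{o}nig's lemma is unavailable and there need be no branch of length $\omega$. Thus $\bigcap_{k<\omega}A_k$ can be strictly larger than the union of the branch intersections.

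The paper meets the same problem in a different form. It uses the interchange $\bigcap_{\beta<\alpha}\bigcup_n G^\beta_n=\bigcup_n\bigcap_{\beta<\alpha}G^\beta_n$ without proof in ``$G_\gamma=A_\gamma$'', and this fails in general. Take $K_0=C$ and perfect sets $K_0\supset K_1\supset\cdots$, all containing $0$, each nowhere dense in the previous one. Put $G^k_n=K_k$ for $n>k$ and $G^k_n=\emptyset$ for $n\le k$. Then $0\in\bigcap_k A_k$, while $\bigcap_k G^k_n=\emptyset$ for every $n$.
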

		
		\emph{Proof.} First, using the cumulative unions 
		 $G^{0}_n = F^{0}_1 \cup F^{0}_2 \cup ... \cup F^{0}_n$ we express $A_{0}$ by
		 $\bigcup_{n=1}^{\infty} G^{0}_n$, where 
		 $G^{0}_n$ is a nowhere dense and closed subset of $X$. Now, every  $F^{0}_n$ contains a 
		 meager $F_{\sigma}$ subset that is a countable union $\bigcup_{j=1}^{\infty} N^{n}_j$ of
		 closed nowhere dense subsets of $F^{0}_n$. Therefore, we obtain a doubly infinite matrix 
			$(n,j): = N^{n}_j$.\\ 
			 \indent Now (like in the classical proof of the countability of the union of countably many sets) we create the
			 countable sequence $(M^{1}_n)$ of closed nowhere
			dense sets:
			
			 $$ M^{1}_{1} = N_{11}, M^{1}_{2} = N_{21} \cup N_{12},...,
			    M^{1}_{n} = N_{n1} \cup N_{n-12}\cup...\cup N_{1n},....  $$
					
					Since $N_{nj} \subset G^{0}_n$ (for $j=1,2,...$), we get $ M^{1}_n \subset  G^{0}_n$. 
					 Put now $G^{1}_n = M^{1}_{1} \cup M^{1}_{2} \cup...\cup M^{1}_{n}$, for $n=1,2,...$. In this way we get
					 that $G^{1}_n$ is a nowhere dense,
					 closed subset of $G^{0}_n$ and $A_{1} = \bigcup_{n=1}^{\infty} G^{0}_n$. Moreover, $(G^{0}_n)$
					 is an ascending sequence.\\
					 \indent Now, we can continue inductively the above procedure, using $M^{\alpha}_{n}$ sets instead of $F^{0}_{n}$
					  to define the closed 
					 nowhere dense subsets $N_{nj}$ of $M^{\alpha}_{n}$. Then we obtain the sets $M^{\alpha+1}_{n}$ and finally the ascending sequence
					 $(G^{(\alpha+1)}_n)$ that satisfies $A_{\alpha + 1} = \bigcup_{n=1}^{\infty} G^{(\alpha+1)}_n$. And, for the limit 
					 ordinals $\alpha$ we put $G^{\alpha}_n = \bigcap_{\beta < \alpha} G^{\beta}_n$.\\
					  \indent Denote the transfinite sequence $(\bigcup_{n=1}^{\infty} G^{\alpha}_n)$ by $(G_{\alpha})$ and observe that it
						 satisfies the assumptions of Lemma 2.1. Therefore, there exists
					 a countable ordinal $\gamma$ with $G_{\gamma} = A_{\gamma} = \emptyset$.  $\Box$

\section{The $\lambda$-flow} Let $(X,d)$ be compact metric space with Banach-Mycielski content $\lambda$, then for every closed subset $F$ of $X$ and for every positive integer $n$ we define closed ball of radius $\frac{1}{n}$ around $F$ as 
 $\overline{K(F,\frac{1}{n})} = \{x \in X: d(x, F) \leq \frac{1}{n}\}$. Then, using the fact that every closed ball around $F$ is a compact subset of $X$, we define the $\lambda$-\emph{singularity} of $F$ as follows:

 $$ (1) \ \ \ \ \ \  sing(F) = \lim_{n \to \infty} [\lambda(\overline{K(F,\frac{1}{n})})] - \lambda(F)  .$$

Observe that the sequence $(\lambda(\overline{K(F,\frac{1}{n})}))_{n \in \omega}$ is decreasing and bounded by $0$, so the limit exists and is finite. Moreover, by $(b)$ from Section 2, $\lambda(\overline{K(F,\frac{1}{n})}) \geq \lambda(F)$, for every positive integer $n$, thus $sing(F) \geq 0$. In our considerations we will need a refinement of the singularity introduced by $(1)$. Thus let $U$ be an open subset of X and let $F$ be a closed subset of $\overline{U}$, then define the \emph{singularity restricted to} $\overline{U}$, denoted by 
$sing_{\overline{U}}(F)$, as

 $$ (2) \ \ \ \ \ \  sing_{\overline{U}}(F) = \lim_{n \to \infty} [\lambda(\overline{K(F,\frac{1}{n})} \cap \overline{U})] - \lambda(F)  .$$

 \indent Consider now an open subset $U$ of $X$ and define for any positive integer $n$ the closed sets 
 $S_n = \{x \in \overline{U}: d(x, \partial{U}) \geq \frac{1}{n}\}$. We define the $\lambda$-\emph{saturation} of $U$ as follows:

 $$ sat(U) = \lim_{n \to \infty} \lambda(S_n)  .$$

 Observe that the sequence $(\lambda(S_n))_{n \in \omega}$ is increasing and bounded by $\lambda(\overline{U})$, so the limit exists and is not greater than $\lambda(\overline{U})$. We have the following crucial lemma:

 \begin{lem} Let $(X, d)$ be a compact metric space and let $U \subset X$ be an open subset, then

  $$ \lambda(\overline{U}) = \lambda(\partial{U}) + sing_{\overline{U}}(\partial{U}) + sat(U).   $$ \end{lem}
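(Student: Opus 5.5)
We fix $n$ and split $\overline{U}$ into three compact pieces. Put $B_n = \overline{K(\partial U, \frac{1}{n})} \cap \overline{U}$, a closed neighbourhood of $\partial U$ inside $\overline{U}$, and let $S_{n+1}$ be as in the definition of $sat(U)$. Every point of $\overline{U}$ has distance either $\leq \frac{1}{n}$ or $> \frac{1}{n}$ from $\partial U$, so
$$\overline{U} = B_n \cup S_{n+1} \quad\text{and}\quad \overline{U} \supset B_n \cup S_{n-1}, \qquad B_n \cap S_{n-1} = \emptyset .$$
Indeed, points of $S_{n-1}$ have distance $\geq \frac{1}{n-1} > \frac{1}{n}$ from $\partial U$. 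All these sets are compact, so properties $(b)$, $(c)$, $(d)$ of $\lambda$ from Section 2 give
$$\lambda(B_n) + \lambda(S_{n-1}) \leq \lambda(\overline{U}) \leq \lambda(B_n) + \lambda(S_{n+1}).$$
Letting $n\to\infty$, we have $\lambda(S_{n\pm1}) \to sat(U)$ by monotonicity. Also $\lambda(B_n)$ is decreasing and tends to $sing_{\overline{U}}(\partial U) + \lambda(\partial U)$ by definition $(2)$. Both bounds therefore converge to $\lambda(\partial U) + sing_{\overline{U}}(\partial U) + sat(U)$, and this yields the formula.

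The main obstacle is justifying the decomposition $\overline{U} = B_n \cup S_{n+1}$ exactly. The inequality $d(x,\partial U) \ge \frac1{n+1}$ for points with $d(x,\partial U) > \frac1n$ is immediate, so this direction poses no problem. The real issue is whether the limits in $(1)$ and $(2)$ exist as stated. The sequence $\lambda(B_n)$ is monotone decreasing because the $B_n$ are nested, and it is bounded below by $\lambda(\partial U)\geq 0$. The sequence $\lambda(S_n)$ is monotone increasing and bounded by $\lambda(\overline U)$. Both limits are therefore finite, and the squeeze argument goes through.

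A second point to check is the degenerate case where $\partial U=\emptyset$, that is, $U$ is clopen. There $d(x,\partial U)=+\infty$, so $S_n=\overline U$ and $B_n=\emptyset$. The formula then reads $\lambda(\overline U)=0+0+\lambda(\overline U)$, which is consistent, and we treat this case separately at the start.
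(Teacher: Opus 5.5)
Your proof is correct and follows essentially the paper's argument: you bound $\overline{U}$ above by a covering and below by a disjoint union, both made of a closed shell around $\partial U$ and an inner set $S_m$, then pass to the limit using properties (b)–(d) of $\lambda$ and monotonicity. The paper takes iterated limits in two independent indices instead of your coupled $n-1,n,n+1$, and it omits the intersection with $\overline{U}$ that you include explicitly; that intersection is needed for the containment in $\overline{U}$ and to match the restricted singularity.
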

	
	\emph{Proof.} Observe first that $\overline{U} \subset S_m \cup \overline{K(\partial{U},\frac{1}{n})}$, where $n \leq m$. And this implies by $(b)$ and $(c)$ (the properties of $\lambda$ from section 2) that $\lambda (\overline{U}) \leq \lambda(S_m) + \lambda(\overline{K(\partial{U},\frac{1}{n})})$. Hence, by $(2)$ and the fact that $(\lambda(\overline{K(\partial{U}, \frac{1}{n}}))_{n \in \omega}$ is decreasing we get
	
	$$\lambda(\overline{U}) \leq \lambda(S_m) + sing_{\overline{U}}(\partial{U}) +  \lambda(\partial{U}) + \varepsilon,   $$
	
	where $\varepsilon > 0$ is arbitrary and depends on $n$. Now, passing to the limit with $m$ and using the fact that $(\lambda(S_m))_{m \in \omega}$ is increasing, we get:
	
	 $$\lambda(\overline{U}) \leq sat(U) + sing_{\overline{U}}(\partial{U}) +  \lambda(\partial{U}) + \varepsilon.   $$
	
	Since $\varepsilon$ was arbitrary, we get the right-hand inequality.\\
	 \indent For the reverse direction we observe that for $m< n$, $S_m \cup \overline{K(\partial{U}, \frac{1}{n}}) \subset \overline{U}$ and the sets are disjoint. Therefore, by $(b)$ and $(d)$ in section 2, we get
	
	$$ \lambda(S_m) + \lambda(\overline{K(\partial{U},\frac{1}{n}}) \leq  \lambda(\overline{U}). $$
	
	Using the facts that $(\lambda(S_m))_{m \in \omega}$ is increasing and $(\lambda(\overline{K(\partial{U}, \frac{1}{n}}))_{n \in \omega}$
	is decreasing we get that for arbitrary $\varepsilon > 0$ depending on $m$ we have:
	
	  $$ sat(U) - \varepsilon + sing_{\overline{U}}(\partial{U}) + \lambda(\partial{U}) \leq \lambda(\overline{U}).  $$
		
		Since $\varepsilon$ was arbitrary we, get the reverse inequality. $\Box$
			
  In what follows we will need the following lemma that shows that the singularity behaves in a similar way like
	 the Banach-Mycielski content $\lambda$:
	
	\begin{lem} Let $(X,d)$ be a compact metric space and let $\lambda$ be the Banach-Mycielski content in $X$, then the singularity and 
	the restricted singularity are monotonic and
	 finitely additive set functions defined on closures of open subsets of $X$. Moreover, for the congruences defined
	 on the closures, the singularity is congruence-invariant.\end{lem}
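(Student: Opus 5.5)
The plan is to reduce all three claims (monotonicity, finite additivity, congruence-invariance) to properties $(b)$, $(c)$, $(d)$, $(e)$ of $\lambda$. These are applied to the compact neighbourhoods $\overline{K(F,\frac{1}{n})}$ and to $F$ itself, and then we pass to the limit $n\to\infty$ in $(1)$ and $(2)$. Throughout, $F$ and $G$ denote closures of open sets (hence compact), and $sing(F)$ is written as the difference of the two limits $L(F):=\lim_n \lambda(\overline{K(F,\frac{1}{n})})$ and $\lambda(F)$. Each property is proved separately for $L$ and for $\lambda$, and the results are then combined. The restricted version is handled identically after intersecting every neighbourhood with the fixed compact set $\overline{U}$. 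Since $(b)$–$(d)$ hold for compact subsets of $\overline{U}$, the same argument goes through verbatim.

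\emph{Additivity.} Let $F,G$ be disjoint closed sets. By compactness $\delta=d(F,G)>0$, so for $n>2/\delta$ the sets $\overline{K(F,\frac1n)}$ and $\overline{K(G,\frac1n)}$ are disjoint compact sets. Their union is $\overline{K(F\cup G,\frac1n)}$. Property $(d)$ then gives $\lambda(\overline{K(F\cup G,\frac1n)})=\lambda(\overline{K(F,\frac1n)})+\lambda(\overline{K(G,\frac1n)})$ for all large $n$. Passing to the limit gives $L(F\cup G)=L(F)+L(G)$, and $(d)$ applied to $F,G$ gives the same identity for $\lambda$. Subtracting yields $sing(F\cup G)=sing(F)+sing(G)$. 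The same holds for $sing_{\overline U}$, since intersecting with $\overline U$ preserves disjointness.

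\emph{Congruence-invariance.} Let $\sigma$ be a congruence defined on a set containing $\overline{K(F,\frac1n)}$, for instance one defined on all of $X$ or on a neighbourhood of $F$. Then $\sigma(\overline{K(F,\frac1n)})=\overline{K(\sigma F,\frac1n)}\cap \sigma(\mathrm{dom}\,\sigma)$, and by $(e)$ the two $\lambda$-values agree. I expect this step to be the main obstacle. A congruence defined only on $F$ need not extend to the metric neighbourhoods, so the statement must be read for congruences whose domain contains the relevant neighbourhoods, i.e.\ defined on the closures in question. For such $\sigma$, distances $d(x,F)$ for $x$ in the domain are preserved, so the neighbourhood of $F$ maps exactly onto the neighbourhood of $\sigma(F)$ inside the image. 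Then $(e)$ gives equality of both $L$ and $\lambda$, and hence of $sing$.

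\emph{Monotonicity.} This is the delicate point, because $sing$ is a difference of two monotone quantities and is not obviously monotone. The plan is to use additivity rather than $(b)$ alone. For $F\subset G$, decompose $G$ into $F$ and the remaining piece, approximating the latter by closed sets $G\cap\{x:d(x,F)\ge \frac1k\}$ that are disjoint from $F$. Additivity applied to $F$ and this piece shows $sing(G)\ge sing(F)+sing(\text{piece})-\varepsilon_k\ge sing(F)-\varepsilon_k$, using $sing\ge0$ from Section 3. Controlling the error term $\varepsilon_k$ as $k\to\infty$ is where I anticipate needing care. The estimate would use subadditivity $(c)$ on the overlap region near $\partial F$, together with the monotone convergence of the neighbourhood values already used in the proof of Lemma 3.1.
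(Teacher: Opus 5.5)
Your additivity argument is correct for genuinely disjoint closed sets, which is more than the paper's proof gives, since it never addresses additivity. The real gap is monotonicity. The error term $\varepsilon_k$ cannot be controlled, because the claim is false.

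Let $X=\{0\}\cup\{\frac1k:k\geq1\}\subset\mathbb{R}$ and $F=\{0\}\cup\{2^{-j}:j\geq0\}$. Then $F$ is the closure of the open set $\{2^{-j}:j\geq0\}$ of isolated points. The points $1,\frac12,\dots,\frac1K$ with $K=\lfloor t^{-1/2}\rfloor$ are pairwise more than $t$ apart, so $E(X,t)\geq\lfloor t^{-1/2}\rfloor$. On the other hand $E(F,t)\leq\log_2(1/t)+2$, hence $\lambda(F)=0$. But $\overline{K(F,\frac1n)}\supset\{0\}\cup\{\frac1k:k\geq n\}$, whose complement in $X$ has only $n-1$ points, so $\lambda(\overline{K(F,\frac1n)})=1$ for every $n$. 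Thus $sing(F)=1>0=sing(X)$, although $F\subset X$ and both are closures of open sets. The same holds for $sing_{\overline U}$ with $U=X$.

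In your scheme, $G_k=X\cap\{x:d(x,F)\geq\frac1k\}$ is finite, so $sing(F\cup G_k)=1$ and $\varepsilon_k\geq1$ for every $k$. Killing the error would require $\lambda(F\cup G_k)\to\lambda(X)$. That is an inner continuity a merely finitely additive content on compacta lacks; here $\lambda(F\cup G_k)=0$ for all $k$.

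The paper does not supply the missing idea either. Its inference of $u\leq w_n$ from $\overline{K(A,\frac1n)}\cup B\subset\overline{K(B,\frac1n)}$ would need $\lambda(\overline{K(A,\frac1n)}\cup B)\geq x_n+b-a$. For $A=F$, $B=X$ one has $u=1$ and $w_n=0$.

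The same example confines your additivity to disjoint sets. Take $F'=\{0\}\cup(X\setminus F)$, also the closure of an open set. Then $F$ and $F'$ have disjoint interiors and union $X$, yet $sing(F)+sing(F')=1+0\neq sing(X)$.

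Your repair of congruence-invariance also overreaches. The identity $\sigma(\overline{K(F,\frac1n)})=\overline{K(\sigma F,\frac1n)}\cap\sigma(\mathrm{dom}\,\sigma)$ yields $L(\sigma F)=L(F)$ only if the \emph{image} of $\sigma$ contains a neighbourhood of $\sigma(F)$. A domain containing a neighbourhood of $F$ is not enough.

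Adjoin to $X$ the translate $3+F$. Since $E(3+F,t)=O(\log(1/t))$, this does not change $\lambda$ on subsets of $X$, so still $sing(F)=1$. Meanwhile $3+F$ is clopen, so $sing(3+F)=0$. The map $x\mapsto x-3$ is a congruence defined on the neighbourhood $3+F$ of itself and carries it onto $F$. So invariance fails under your hypothesis. Its inverse, defined on the closure $F$, refutes the lemma as literally stated.

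What your argument does establish is invariance under isometries of $X$ into itself, which are onto by compactness. More generally it gives invariance under congruences mapping some $\overline{K(F,r)}$ onto $\overline{K(\sigma F,r)}$. So the provable content is additivity on disjoint closed sets plus this restricted invariance. No choice of approximation will yield monotonicity.
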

	
	\emph{Proof.} Consider two closed subsets $A$ and $B$ of $X$ such that $A \subset B$. Hence we get 
	 $\overline{K(A, \frac{1}{n})} \subset \overline{K(B, \frac{1}{n})}$ for any positive integer $n$. Put now $\lambda(A) = a$,
	 $\lambda(B) = b$, $\lambda(\overline{K(A, \frac{1}{n})}) = x_n$ and $\lambda(\overline{K(B, \frac{1}{n})}) = y_n$. By monotonicity
	 of $\lambda$ we get $a \leq b$ and $x_n \leq y_n$. Therefore, there exist non-negative real numbers $u$ and $w_n$
	 such that $a+u = b$ and $x_n + w_n = y_n$. Moreover, since 
	 $\overline{K(A, \frac{1}{n})}) \cup B \subset \overline{K(B, \frac{1}{n})})$, we get $u \leq w_n$.
	  Thus we can conclude the proof as follows:
	 \[
\begin{split}
    sing(B) &= \lim_{n \to \infty} (y_n - b) = \lim_{n \to \infty} (x_n + w_n - (a+u))= \\
         &= \lim_{n \to \infty} (x_n - a) +
			 \lim_{n \to \infty} (w_n - u) \geq sing (A).
\end{split}
\]  
The proof for restricted singularity is exactly the same and the congruence-invariance follows from the fact that congruences preserve metric properties of the space. $\Box$

\vspace{1cm}

Our next step is to extend Lemma 3.1, 
 substituing the nowhere dense set $\partial{U}$ by the meager $F_{\sigma}$ sets. This will allow for the next generalizations concluding at the core of our work that we call the $\lambda$-flow.
  So let $U \subset X$ be an open set and let $F = \bigcup_{n=1}^{\infty} F_n$ be a meager $F_{\sigma}$ subset of $\overline{U}$. 
	We can express $F$ using the cumulative unions $K_n = F_1 \cup...\cup F_n$ for $n=1,2,...$, and the sets $K_n$ are still nowhere dense and closed. Now, Lemma 3.1 implies that:
					
	$$ (3) \ \ \ \ \ \    \lambda(\overline{U}) = \lambda(\overline{V_n}) = \lambda(\partial{V_n}) + sing_{\overline{V_n}}(\partial{V_n})
	 + sat(V_n),   $$
					
	 where $V_n = U \setminus K_n$ and $\partial{V_n} = K_n$. Therefore we define the desired extension as:
	
	  $$ (4) \ \ \ \ \ \   \lambda_1(\overline{U}) := \lim_{n \to \infty} \lambda(\overline{V_n}) = \lambda(\overline{U}). $$
		
		Define now the following three sequences $(a_n)_{n \in \omega}$, $(b_n)_{n \in \omega}$ and
	$(c_n)_{n \in \omega}$ with:
	
	 $$ a_n = \lambda(K_n), b_n = sat(V_n), c_n = sing_{\overline{V_n}}(\partial{V_n}). $$
	
	Clearly, $(a_n)_{n \in \omega}$ is increasing and bounded by $\lambda(\overline{U})$ and by Lemma 3.2, $(c_n)_{n \in \omega}$ is increasing and bounded by $\lambda(\overline{U})$ too. And since $\partial{V_{n+1}} \subset \partial{V_n}$, the sequence $(b_n)_{n \in \omega}$ is decreasing and bounded by $0$. Therefore the limits $a = \lim_{n \to \infty} a_n $, $b = \lim_{n \to \infty} b_n $ and 
	$c = \lim_{n \to \infty} c_n $ exist and are some finite positive real numbers.\\	 
	\indent Now, we can show the following: 
		
		 \begin{lem} The set function $\lambda_1$ defined on the closures of open subsets of $X$ is well-defined and is 
		coungruence-invariant. \end{lem}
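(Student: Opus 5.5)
Since $\overline{V_n}=\overline{U}$ for every $n$, the sequence in $(4)$ is constant, so the limit always exists. The real content of well-definedness is therefore independence of the representation of the meager $F_\sigma$ set $F=\bigcup_n F_n$, together with independence of the three-term decomposition coming from Lemma 3.1. The plan has three steps.

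\emph{Step 1: the limit in $(4)$ exists and equals $\lambda(\overline U)$.} For each $n$, $V_n=U\setminus K_n$ is open. Since $K_n$ is closed and nowhere dense, $V_n$ is dense in $U$, so $\overline{V_n}=\overline U$. Applying Lemma 3.1 to $V_n$ gives identity $(3)$ for every $n$, namely $a_n+b_n+c_n=\lambda(\overline U)$. Here I would use the inclusion $\partial V_n\subset\partial U\cup K_n$ rather than equality; only monotonicity of $\lambda$ and of $sing$ (Lemma 3.2) will be needed. Letting $n\to\infty$ and using the monotone limits $a,b,c$ already established, I get $a+b+c=\lambda(\overline U)$. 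The value $\lambda_1(\overline U)=\lim_n\lambda(\overline{V_n})$ therefore exists and coincides with $\lambda(\overline U)$.

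\emph{Step 2: independence of the choices.} Take two representations $F=\bigcup F_n=\bigcup F'_n$, with cumulative unions $K_n$ and $K'_n$. Both give the same value $\lambda(\overline U)$ by Step 1, so $\lambda_1$ does not depend on the representation. If one also wants the limits $a,b,c$ themselves to be representation-independent, I would interleave the two families: $K''_n=K_n\cup K'_n$ is again an increasing sequence of closed nowhere dense sets with union $F$. The monotonicity of $\lambda$ (property $(b)$) and of the restricted singularity (Lemma 3.2) then squeeze the limits for $(K_n)$ and $(K'_n)$ between those for $(K''_n)$. A cofinality argument applies: each $K_n$ is contained in the union $F$, but not necessarily in any single $K'_m$, so I expect this to give only $\le$ in general. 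Since only $\lambda_1$ is asserted to be well defined, Step 1 suffices and the interleaving is optional.

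\emph{Step 3: congruence invariance.} Let $\sigma$ be a congruence defined on $\overline U$ with $\sigma(\overline U)=\overline W$ for an open $W$. By property $(e)$, $\lambda(\sigma(\overline U))=\lambda(\overline U)$, so $\lambda_1(\overline W)=\lambda(\overline W)=\lambda(\overline U)=\lambda_1(\overline U)$. For the finer statement that the pieces $a_n,b_n,c_n$ are preserved, note three things:
\begin{itemize}
\item $\sigma$ is an isometry on $\overline U$, so it maps the sets $\overline{K(K_n,\frac1m)}\cap\overline U$ onto the corresponding neighbourhoods in $\overline W$;
\item it maps the sets $S_m$ defined relative to $V_n$ onto those defined relative to $\sigma(V_n)$;
\item it carries closed nowhere dense subsets of $\overline U$ to closed nowhere dense subsets of $\overline W$, so the image of the meager $F_\sigma$ set is again a meager $F_\sigma$ set.
\end{itemize}
The restricted singularity and the saturation are thus invariant, by Lemma 3.2 and $(e)$.

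\emph{Main obstacle.} The delicate point is Step 3. The set $\sigma(U)$ need not be open in $X$, and relative interior and boundary taken inside $\overline U$ can differ from those taken in $X$. My first attempt would be to argue that all quantities involved (distances to $K_n$ within $\overline U$, and the sets $S_m$) are defined intrinsically by the metric restricted to $\overline U$. They are then transported by $\sigma$ regardless of how $\overline U$ sits in $X$, and one works with $sing_{\overline U}$ rather than $sing$ throughout.
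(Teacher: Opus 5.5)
Your argument is correct and is essentially the paper's: since $\overline{V_n}=\overline U$, the quantity in $(4)$ is just $\lambda(\overline U)$ whatever meager $F_\sigma$ set or enumeration is used, and congruence-invariance is then property $(e)$. The extra material (the interleaving, and invariance of the individual terms $a_n,b_n,c_n$) is not needed for the lemma and is best dropped, because termwise invariance of $sat$ is not justified when $\sigma(U)$ fails to be open in $X$, the very obstacle you flagged.
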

		
		\emph{Proof.} First, we observe that by Lemma 3.1, for a fixed meager $F_{\sigma}$ set $F= \bigcup_{n=1}^{\infty} F_n$ contained in the 
		 closure $\overline{U}$ of an open subset $U$ of $X$
		 the order of creating of the cumulative unions using the elements from $\{F_n: n=1,2,...\}$ has no impact on
		 the value of $\lambda_1(\overline{U})$. 
		 Then, again by Lemma 3.1, we get that 
		any $F_{\sigma}$ meager set $G \subset \overline{U}$ determines the same limit in $(4)$. Finally, the coungruence-invariance follows
		 from the invariance of $\lambda$. $\Box$

		We will need the following lemma:
		
		\begin{lem} Let $F = \bigcup_{n=1}^{\infty} F_n$ be an $F_{\sigma}$ subset of a compact metric space $X$ and let $m$ be a
			set function defined for every $F_n$ such that:
			
			 $(i) \ \ \ \ \ m(F_n) \geq 0$;
			
			 $(ii)$ \ \ \ \ \ for any two finite subsets $A$ and $B$ of positive integers such that $A \subset B$, 
			 $m(\bigcup_{i \in A} F_i ) \leq m(\bigcup_{i \in B} F_i)$;
			
			 $(iii)$ \ \ \ \ \ if $F = \bigcup_{n=1}^{\infty} G_n$, where for every positive integer $n$,
			$G_n$ is a finite sum of $n$ elements from $\{F_n: n=1,2,...\}$ and  
			$G_n \subset G_{n+1}$, then the limit
			 $\lim_{n \to \infty} m(G_n) = a$ is a finite positive number.
			   
				 Then the limit is independent of the choice of any monotonic sequence of cumulative sums obtained from the elements of 
				$\{F_n: n=1,2,...\}. $\end{lem}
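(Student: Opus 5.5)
The plan is a cofinality (interlacing) argument that uses only the monotonicity $(ii)$. Let $(G_n)$ and $(H_n)$ be two increasing sequences of cumulative sums built from $\{F_n: n=1,2,\dots\}$, each exhausting the family in the sense of $(iii)$. By $(iii)$ the limits $a=\lim_{n\to\infty} m(G_n)$ and $b=\lim_{n\to\infty} m(H_n)$ exist and are finite. The goal is to show $a=b$.

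The first step is to encode each $G_n$ by its finite index set $A_n\subset\mathbb{N}$, so that $G_n=\bigcup_{i\in A_n}F_i$. Similarly, encode each $H_n$ by $B_n$. Because the sequences are monotone and exhaust the family, we have $A_n\subset A_{n+1}$, $\bigcup_n A_n=\mathbb{N}$, and the same for $(B_n)$. Here "exhausting" is read as: every $F_i$ eventually appears in the sums. This is how the phrase ``cumulative sums obtained from the elements of $\{F_n\}$'' is to be understood, and it is the reading used in Lemma 3.3.

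Next comes the key cofinality observation. Fix $n$. Since $A_n$ is finite and the $B_k$ increase to $\mathbb{N}$, there is some $k(n)$ with $A_n\subset B_{k(n)}$. By $(ii)$ this gives $m(G_n)\le m(H_{k(n)})\le b$; the second inequality holds because $(m(H_k))$ is nondecreasing by $(ii)$ and converges to $b$. Letting $n\to\infty$ yields $a\le b$. The symmetric argument, with the roles of $A$ and $B$ exchanged, gives $b\le a$. Hence $a=b$.

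Two points need care. First, the argument works directly with index sets rather than with the sets $G_n$ themselves. The inclusion $G_n\subset H_k$ of sets would not suffice, since $(ii)$ is only a statement about index sets. Second, one has to check that any monotone sequence of cumulative sums has nondecreasing $m$-values, so that its limit is a supremum; this is immediate from $(ii)$. Hypothesis $(i)$ plays no role except to guarantee that the limits lie in $[0,\infty)$.

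The only potential obstacle is the interpretation of ``any monotonic sequence''. If the sequence is allowed not to exhaust all $F_i$, the claim fails in general. So I would state explicitly that the sequences are required to satisfy $\bigcup_n G_n=F$ with the cumulative structure of $(iii)$, and then the interlacing argument above completes the proof.
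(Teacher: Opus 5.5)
Your interlacing argument is correct and is essentially the paper's proof. The paper normalizes to $G_n=F_1\cup\dots\cup F_n$ and $H_n=F_{\pi(1)}\cup\dots\cup F_{\pi(n)}$ for a permutation $\pi$, uses $\{\pi(1),\dots,\pi(n)\}\subset\{1,\dots,\max_{i\le n}\pi(i)\}$ with (ii) to get $b\le a$, and concludes by symmetry; your nested index sets $A_n$, $B_n$ exhausting $\mathbb{N}$ give the same cofinality argument in slightly greater generality, and your explicit reading of ``exhausting'' is exactly what the paper's permutation tacitly assumes.
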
 
				
				 \emph{Proof.} Let $(G_n)$ and $(H_n)$ be two sequences satisfying $(iii)$ with\\
				  $\lim_{n \to \infty} m(G_n) = a$ 
				 and $\lim_{n \to \infty} m(H_n) = b$. Then, we may assume without loss of generality 
				 that $G_1 = F_1$, $G_2 = F_1 \cup F_2$,...,$G_n = F_1 \cup F_2 \cup ... \cup F_n$.
		     Thus there exists a permutation $\pi: \mathbb{N}_{+} \rightarrow  \mathbb{N}_{+}$ such that
				  $H_1 = F_{\pi(1)}$, $H_2 = F_{\pi(1)} \cup F_{\pi(2)}$,...,$H_n = F_{\pi(1)} \cup F_{\pi(2)} \cup ... \cup F_{\pi(n)}$.
					Take now by $(iii)$ an $\varepsilon >0$ such that for $n > N$ one has $m(H_n) > b - \varepsilon$. Hence, by $(ii)$, we get
						
						$$ m(G_{\pi(i)_{max}}) \geq m(H_n) > b - \varepsilon,$$
						
						where $\pi(i)_{max} = max\{\pi(i),...,\pi(n)\}$. And since $\varepsilon$ is arbitrary and the sequence $(G_n)$ is increasing
						then $\lim_{n \to \infty} m(G_n) = a \geq b$.
						 \indent On the other hand, use now the reverse permutation $\pi^{-1}$ with respect to $(G_n)$ and $(H_n)$. Then the symmetry of 
						 the argument gives $b \geq a$. $\Box$

		Consider now an $F_{\sigma}$ subset $F = \bigcup_{n=1}^{\infty} F_n$ of $X$ with $F_n \subset F_{n+1}$ for any positive integer $n$
		 and assume that $F \subset U$, where $U$ is an open subset of $X$.
		We observe that for any such $n$ one can treat $F_n$ as a compact metric space with subspace topology induced by the one from $X$.
		This allows us to define for any closed subset $L$ of $F_n$ the restricted singularity $sing^{F_n}_{\overline{U}}(L)$ and the 
		saturation $sat^{F_n}(V)$ of open subsets $V$ of $F_n$, but still using the Banach-Mycielski 
		 content $\lambda$ defined in $X$:
		
		$$  sing^{F_n}_{\overline{U}}(L):= \lim_{n \to \infty}
		[\lambda(\overline{K(L,\frac{1}{n})} \cap \overline{U} \cap F_n)] - \lambda(L); $$
		
		 $$sat^{F_n}(V) = \lim_{n \to \infty} (\lambda(S_n)),$$
		
		  where $S_n = \{x \in \overline{V}: d(x, \partial{V}) \geq \frac{1}{n}\}$. Now, let each of the $F_n$ contain 
			an $F_{\sigma}$ meager subset 
		 $M_n = \bigcup_{k=1}^{\infty} F^k_{n}$, where $F^k_{n}$ is nowhere dense and closed and $F^k_{n} \subset F^{k+1}_{n}$  for
		 $k=1,2,...$. Therefore we can apply Lemma 3.1 with respect to the open sets $U^m_n$ in the subspace topology of $F_n$, 
		  $U^m_n = Int F_n \setminus F^1_{n} \cup F^2_{n} \cup ... \cup F^m_{n}$ and the sequences 
			$(a^m_n)_{m \in \omega}$, $(b^m_n)_{m \in \omega}$ and $(c^m_n)_{m \in \omega}$ with:
			
			$$ (5) \ \ \ \ \ \  a^m_n = \lambda(F^m_n), b^m_n = sat^{F_n}(U^m_n), a^m_n = sing^{F_n}_{\overline{U^m_n}}(\partial{U^m_n})   $$
			
			 to get that $\lambda_1(F_n) = \lim_{m \to \infty} \lambda(\overline{U^m_n}) = \lambda(F_n).$
			
			  Observe now that the sequence $(\lambda_1(F_n))_{n \in \omega}$ is increasing and bounded by $\lambda(X) = 1$, so it 
				converges to a limit not greater than $1$. Therefore we can define 
				
				$$  (6) \ \ \ \ \ \  \lambda_2(F) := \lim_{n \to \infty} \lambda_1(F_n) = \lim_{n \to \infty} \lambda(F_n). $$

	We have the following Lemma:
		
	\begin{lem} The set function $\lambda_2$ defined on the meager $F_{\sigma}$ sets is well-defined and congruence-invariant.
		 \end{lem}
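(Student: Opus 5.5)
The proof has two parts: independence of the value in $(6)$ from the chosen representation of $F$, and invariance under congruences.

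\emph{Well-definedness.} Let $F=\bigcup_{n=1}^{\infty}F_n=\bigcup_{n=1}^{\infty}H_n$ be two representations of the meager $F_\sigma$ set $F$, with both sequences of closed sets ascending. Two reductions come first. By Lemma 3.3 each term $\lambda_1(F_n)$ is already independent of the internal meager decomposition $M_n=\bigcup_k F^k_n$ used to compute it, and it equals $\lambda(F_n)$. So it suffices to show
$$\lim_{n\to\infty}\lambda(F_n)=\lim_{n\to\infty}\lambda(H_n).$$

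To compare the two sequences, pass to the common family $\{F_i\cap H_j\}_{i,j}$. These are closed sets, and their finite unions $\bigcup_{i,j\le n}(F_i\cap H_j)=F_n\cap H_n$ form an ascending sequence exhausting $F$.

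Now apply Lemma 3.4 with $m=\lambda$. Hypothesis $(i)$ holds because $\lambda\ge 0$. Hypothesis $(ii)$ is property $(b)$ of $\lambda$. Hypothesis $(iii)$ holds because every ascending sequence of finite unions has nondecreasing $\lambda$-values bounded by $\lambda(X)=1$. Lemma 3.4 then says the limit does not depend on the chosen monotone sequence of cumulative unions. Hence
$$\lim_n\lambda(F_n\cap H_n)=\lim_n\lambda(F_n)\quad\text{and}\quad \lim_n\lambda(F_n\cap H_n)=\lim_n\lambda(H_n).$$

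\emph{Main obstacle.} Lemma 3.4 is stated for rearrangements of a single fixed family. Here the sets $F_n$ are not themselves finite unions of the pieces $F_i\cap H_j$ indexed up to $n$, so the lemma does not apply verbatim.

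To handle this, I would establish a sandwich directly. For fixed $n$, the sets $F_n\cap H_k$ increase to $F_n$ as $k\to\infty$. Compactness alone does not force $\lambda(F_n\cap H_k)\to\lambda(F_n)$, since $\lambda$ is only finitely additive. The plan is to use the meagerness of $F_n$ in $\overline U$ (through the quantities $(5)$ and the identity $(3)$ of Lemma 3.1) to show that the defect $\lambda(F_n)-\lambda(F_n\cap H_k)$ is absorbed by the singularity terms, which are monotone by Lemma 3.2.

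With that, one gets the two-sided bound
$$\lambda(F_n\cap H_k)\le\min\bigl(\lambda(F_n),\lambda(H_k)\bigr).$$
Taking limits in $k$ and then in $n$ gives $a\le b$. The symmetric argument gives $b\le a$, so the two limits coincide.

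\emph{Congruence-invariance.} Let $\sigma$ be a congruence defined on $F$. Then $\sigma(F)=\bigcup_n\sigma(F_n)$. Each $\sigma(F_n)$ is closed, because it is a continuous image of a compact set. The sets $\sigma(F_n)$ are ascending, and they are nowhere dense in the relevant subspace, because $\sigma$ is an isometry onto its image. So $\sigma(F)$ is again a meager $F_\sigma$ set, and $\sigma(F_n)$ is an admissible representation of it.

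Property $(e)$ gives $\lambda(\sigma(F_n))=\lambda(F_n)$ for each $n$. Lemma 3.3 shows the $\lambda_1$-values are preserved as well. Passing to the limit in $(6)$, and using the well-definedness just proved, yields $\lambda_2(\sigma(F))=\lambda_2(F)$.
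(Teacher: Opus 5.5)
Your proof breaks at the step you yourself label the main obstacle, and that step cannot be repaired. To turn $\lambda(F_n\cap H_k)\le\lambda(H_k)$ into $\lambda(F_n)\le b$, you need $\lambda(F_n\cap H_k)\to\lambda(F_n)$ as $k\to\infty$. That is continuity from below of $\lambda$ along increasing compact sets. The bound $\lambda(F_n\cap H_k)\le\min(\lambda(F_n),\lambda(H_k))$ is just property $(b)$ and gives nothing on its own. Identity $(3)$ and Lemma 3.2 only split $\lambda(\overline{U})$ into nonnegative pieces and say nothing about such limits.

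The finitely additive content $\lambda$ does not have this continuity. In $\mathbb{R}^2$ let $F=\{(0,0)\}\cup\{(1/k,0):k\ge1\}$. At each $(1/k,0)$ attach a vertical Cantor set $D_k$ of diameter $k^{-4}$ and box dimension $s\in(\frac14,\frac12)$, and put $X=F\cup\bigcup_k D_k$.
\begin{itemize}
\item $X$ is compact and perfect, and $F$ is closed and nowhere dense in $X$.
\item $E(F,t)$ and $E(X,t)$ are both of order $t^{-1/2}$. The points $(1/k,0)$ with $k\le t^{-1/2}$ are $t$-separated, while the $D_k$ add only $O(t^{-1/2}+t^{-s}\sum_k k^{-4s})$ sets. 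Hence $\lambda(F)>0$.
\item Take $F_n=F$ for all $n$ and $H_k=\{(0,0),(1,0),\dots,(1/k,0)\}$. These are two ascending representations of $F$ by closed nowhere dense sets.
\item Yet $\lim_n\lambda(F_n)=\lambda(F)>0=\lim_k\lambda(H_k)$, because finite sets have content $0$.
\end{itemize}
Here $F_n\cap H_k=H_k$, so the defect $\lambda(F_n)-\lambda(F_n\cap H_k)=\lambda(F)$ never shrinks. Your preliminary claim $\lim_n\lambda(F_n\cap H_n)=\lim_n\lambda(F_n)$ also fails.

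So the independence from the representation that you set out to prove is false, and no absorption argument can give it. The paper's proof claims much less. It fixes one family $\{F_n\}$ and uses Lemma 3.4 only to show that the limit in $(6)$ does not depend on the order $\pi$ in which the members of that family are accumulated. That needs only monotonicity: each $F_{\pi(1)}\cup\dots\cup F_{\pi(n)}$ lies in some $F_1\cup\dots\cup F_N$, and conversely.

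Your congruence step inherits the same problem. From $\lambda(\sigma(F_n))=\lambda(F_n)$ you only get equality of the limits for the transported family $\{\sigma(F_n)\}$, and the appeal to ``the well-definedness just proved'' is not available. Moreover, a congruence need not preserve nowhere density in $X$: it can send a non-isolated point to an isolated one. So $\sigma(F)$ need not even be a meager $F_\sigma$ set.
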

			
	\emph{Proof.} Let $F = \bigcup_{n=1}^{\infty} F_n$ be an $F_{\sigma}$ subset of $X$. Express $F$ using the cumulative unions 
	$K_n = F_1 \cup...\cup F_n$ for $n=1,2,...$, where the sets $K_n$ are nowhere dense and closed.
	First, we apply Lemma 3.3 to get that $\lambda_{1} (K_n)$ is well defined for any $n$.\\
	  \indent Then, we observe that by Lemma 3.4 we get that
		the definition of the limits defined by $(6)$, for the sequences corresponding to different cumulative unions $H_n$ obtained by different permutations $\pi$  of the elements in $\{F_n: n=1,2,...\}$ are all equal.
 And the congruence-invariance follows from the invariance of $\lambda$. $\Box$
		\vspace{1cm}	
		
         In addition to Lemma 3.5 we observe that for an $F_{\sigma}$ subset $F = \bigcup_{n=1}^{\infty} F_n$ of $X$ 
		with $F_n \subset F_{n+1}$ we get
	  by Lemma 3.1, $(4)$ and $(5)$ and the fact that the sequences in $(5)$
		 are monotonic and bounded (it follows in the same manner like explained just before Lemma 3.3, taking into an account
		 the subspace topology and the restricted singularity $sing^{F_n}$ and the saturation $sat^{F_n}$), we get
			$\lambda_1(F_n) = \lim_{m \to \infty} (a^m_n + b^m_n + c^m_n) = a^n + b^n + c^n = \lambda(F_n)$. And then, using the
			 fact that the sequence $(F_n)$ is ascending and the sequences $(a^n)$, $(b^n)$ and $(c^n)$ are 
			 bounded by $\lambda_1(X) = \lambda(X)$ we obtain
			 $\lim_{n \to \infty} (a^n + b^n + c^n) = \lim_{n \to \infty} a^n + \lim_{n \to \infty} b^n + \lim_{n \to \infty} c^n =
			 a_F + b_F +c_F = \lambda_2 (F)$.\\  
			 \indent Morover, we observe that the ascending sequences $(F^m_n)_{m \in \mathbb{N}_{+}}$ and\\ 
			 $(F^m_{n+1})_{m \in \mathbb{N}_{+}}$ of subsets of $F_n$ and $F_{n+1}$, respectively, that define the sequences in $(5)$, imply
			 $\bigcup_{m=1}^{\infty} F^m_n \subset \bigcup_{m=1}^{\infty} F^m_{n+1}$. Therefore
				 $F_n \cap \bigcup_{m=1}^{\infty} F^m_{n+1}$ defines exactly the family $\{F^m_n\}_{m=1}^{\infty}$ of closed nowhere dense
				 subsets of $F_n$. Therefore the limits defining $a^{n+1}$ and $c^{n+1}$ are well-defined extensions of the limits defining $a^n$
				 and $c^{n}$, respectively. Also, $F^m_n \subset F^m_k$ for some positive integer $k$, so by $(b)$ in Setion 2 and Lemma 3.2,
				 we get that the set functions $f_1(F_n) = a_n$ and $f_3(F_n) = c_n$ are increasing. The fact that $f_2(F_n) = b_n$ 
				 is increasing follows from $F_n \subset F_{n+1}$ and $F_n \cap \bigcup_{m=1}^{\infty} F^m_{n+1} = 
				 \bigcup_{m=1}^{\infty} F^m_{n}$. Therefore, the functions
				$f_1(F_n) = a_n$, $f_2(F_n) = b_n$ and $f_3(F_n) = c_n$ satisfy the assumptions of Lemma 3.4,
				 and thus the limits $a_F$, $b_F$ and $c_F$ are independent of the choice of ascending sequences $(F_n)$ 
				 of closed nowhere dense sets $F_n$ forming $F$.\\
				\indent In this way, defining the set $F^{(1)} = \bigcup_{n=1}^{\infty}\bigcup_{m=1}^{\infty} F^m_n$ we obtain an $F_{\sigma}$
				set that is meager in the subspace topology of $F$. Moreover, by Lemma 3.3, we can put in the definition of 
				$\lambda_1(F^m_n)$ the empty sets as the nowhere dense sets, thus obtaining 
				$\lambda_2(F^{(1)}) = \lim_{n \to \infty}\lim_{m \to \infty} \lambda(F^m_n) = a_F$. And 
				this motivates the following notation: $\lambda_2(F \setminus F^{(1)}): = b_F + c_F$ that allows us to write:
				
				 $$(7) \ \ \ \ \ \ \lambda_2(F) = \lambda_2(F^{(1)}) + \lambda_2(F \setminus F^{(1)}).$$

				 Let $X$ be a compact metric space and let $(F^{(\alpha)})_{\alpha \leq \gamma}$ be a transfinite decreasing
				 sequence of $F_{\sigma}$ subsets of $X$ that stabilizes at a countable ordinal $\gamma$ 
				 (that is $F^{(\gamma)} = \emptyset$).
				 Consider now the following expression:
				
				 $$ (8) \ \ \ \ \ \  \lambda_3(F^{(0)}):= \sum_{m=1}^K \sum_{n=1}^{\infty} \lambda_2(F^{\alpha^{m}_{n+1}}_{\beta_m} \setminus F^{\alpha^{m}_{n}}_{\beta_m}) + \sum_{i=1}^k \lambda_2(F^{(\alpha +1)}_{\gamma_i} \setminus F^{(\alpha)}_{\gamma_i}),$$
				
				 where $F^{\alpha^{m}_{n+1}}_{\beta_m}$ is a re-indexing of $(F^{(\alpha)})_{\alpha \leq \gamma}$ defined as follows:
				$(\beta_m)$ is the countable increasing sequence of limit ordinals less or equal $\gamma$ and $(\alpha^{m}_{n})$
				 is the countable increasing sequence of successor ordinals such that $\alpha^{m}_{n} < \beta_m$. We admit the case when $K = \infty$,
				 moreover in the case when $\gamma$ is not limit, $\gamma_i$ are the finitely many successor ordinals that are predecessors
				 of $\gamma$. We have the following lemma:
				
				 \begin{lem} Let $X$ be a compact metric space and let $\gamma$ be a countable ordinal, then for any decreasing sequence 
				  $(F^{(\alpha)})_{\alpha \leq \gamma}$ of $F_{\sigma}$ that stabilizes at $\gamma$ the following holds:
					
					$$\lambda_3(F^{(0)}) = \lim_{n \to \infty} \lambda(F^{(0)}_n) = \lambda_2(F^{(0)}).$$  \end{lem}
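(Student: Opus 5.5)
I would prove the lemma by transfinite induction on the stabilizing ordinal $\gamma$. The idea is to show that the double sum in $(8)$ telescopes. Its value should then coincide with the quantity $\lambda_2(F^{(0)})=\lim_{n\to\infty}\lambda(F^{(0)}_n)$ given by $(6)$. The second equality in the statement is just the definition $(6)$ together with Lemma 3.5, which says the limit does not depend on the ascending closed sequence $(F^{(0)}_n)$ chosen. So the real content is the identity $\lambda_3(F^{(0)})=\lambda_2(F^{(0)})$.

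\textbf{Successor steps.} The basic tool is the decomposition $(7)$, applied to each pair $F^{(\alpha+1)}\subset F^{(\alpha)}$:
$$\lambda_2(F^{(\alpha)})=\lambda_2(F^{(\alpha+1)})+\lambda_2(F^{(\alpha)}\setminus F^{(\alpha+1)}).$$
I read the differences in $(8)$ as these layer differences $\lambda_2(F^{(\alpha)}\setminus F^{(\alpha+1)})$. Iterating over finitely many successors $\gamma_1<\dots<\gamma_k$ gives
$$\lambda_2(F^{(\gamma_1)})=\sum_{i}\lambda_2(F^{(\gamma_i)}\setminus F^{(\gamma_{i}+1)})+\lambda_2(F^{(\gamma)}),$$
and the last term is $\lambda_2(\emptyset)=0$. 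This handles the finite tail in $(8)$, and it also settles the base case when $\gamma$ is finite.

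\textbf{Limit steps.} Let $\beta$ be a limit ordinal with cofinal successors $\alpha_1<\alpha_2<\dots$. The partial sums of the layers between $\alpha_1$ and $\alpha_N$ equal $\lambda_2(F^{(\alpha_1)})-\lambda_2(F^{(\alpha_N)})$. I then need the continuity statement
$$\lambda_2(F^{(\alpha_N)})\to\lambda_2(F^{(\beta)}) \quad\text{as } N\to\infty,$$
where $F^{(\beta)}=\bigcap_N F^{(\alpha_N)}$. To get it, I would use the construction in Theorem 2.2, which replaces the sequence by ascending closed sequences $G^{\alpha}_n$ with $G^{\beta}_n=\bigcap_{\alpha<\beta}G^{\alpha}_n$. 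Since $\lambda$ is defined through entropies, I would then argue as follows:
\begin{itemize}
\item For each fixed $n$, the values $\lambda(G^{\alpha_N}_n)$ decrease to $\lambda(G^{\beta}_n)$. This uses compactness: a decreasing sequence of compacta lies, for large $N$, inside any closed neighbourhood $\overline{K(G^\beta_n,1/k)}$.
\item The discrepancy is controlled by $sing(G^\beta_n)$, and Lemma 3.2 gives the monotonicity needed here.
\item Interchanging the limits in $n$ and $N$ then uses the monotone boundedness of the sequences $a^n,b^n,c^n$ from the discussion after Lemma 3.5.
\end{itemize}
Summing over the countably many limit ordinals $\beta_m\le\gamma$, with absolute convergence since all terms are nonnegative and bounded by $\lambda(X)=1$, and then telescoping through the whole chain gives $\lambda_3(F^{(0)})=\lambda_2(F^{(0)})-\lambda_2(F^{(\gamma)})=\lambda_2(F^{(0)})$.

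\textbf{Main obstacle.} The hard step is the limit-stage continuity. A finitely additive content need not be continuous along decreasing intersections. The saturation/singularity bookkeeping of Lemma 3.1 is exactly what must absorb the defect: I would try to show that the mass lost in the limit is $\lim_k sing(G^\beta_n)$, and that it is already counted in the $c$-components of the successor layers. If that fails, the fallback is to interpret $(8)$ as defined via these ascending closed representatives, so that continuity reduces to the ordinary monotone limits in $(6)$.
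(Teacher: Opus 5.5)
Your outline follows the paper's own proof: telescope the successor layers via $(7)$, then induct over the limit ordinals $\beta_m$. You correctly locate the only nontrivial point, continuity at limit stages. The paper's proof asserts the needed equality at $\beta_1$ and then says ``the rest of the proof follows by induction'' without justifying it. Your sketch does not supply this step, and it cannot be supplied. For a decreasing sequence of compacta, compactness only gives
\[
\lim_{N\to\infty}\lambda(G^{\alpha_N}_n)\le\lim_{k\to\infty}\lambda\bigl(\overline{K(G^{\beta}_n,\tfrac1k)}\bigr)=\lambda(G^{\beta}_n)+sing(G^{\beta}_n).
\]
So your first bullet, that $\lambda(G^{\alpha_N}_n)$ decreases \emph{to} $\lambda(G^{\beta}_n)$, does not follow. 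The defect can be as large as $sing(G^{\beta}_n)$, and the monotonicity from Lemma 3.2 does not make it vanish. Nor is the lost mass ``already counted'' in the successor layers. After telescoping, the layers below $\beta$ contribute exactly $\lambda_2(F^{(\alpha_1)})-\lim_N\lambda_2(F^{(\alpha_N)})$, and inductively the terms from $\beta$ on contribute $\lambda_2(F^{(\beta)})$. Hence the quantity $\lim_N\lambda_2(F^{(\alpha_N)})-\lambda_2(F^{(\beta)})$ appears in no summand of $(8)$; it is simply dropped.

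In fact the identity fails. Let $X=\{0\}\cup\{1/k:k\ge1\}\subset\mathbb{R}$ and $\gamma=\omega+1$. Put $F^{(n)}=\{0\}\cup\{1/k:k>n\}$ for $n<\omega$, $F^{(\omega)}=\{0\}$ and $F^{(\omega+1)}=\emptyset$. Since $E(X,\tfrac1m)\to\infty$, finite sets have content $0$. By $(d)$, $\lambda(F^{(n)})=1$ for every $n<\omega$, while $\lambda(\{0\})=0$; the lost mass is exactly $sing(\{0\})=1$. Every layer $F^{(\alpha)}\setminus F^{(\alpha+1)}$ is a single point. So under either reading of the summands of $(8)$ (as $\lambda_2$ of the layer, or as a difference of consecutive values) you get $\lambda_3(F^{(0)})=0$. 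On the other hand, $\lim_n\lambda(F^{(0)}_n)=\lambda(X)=1$ for the representation $F^{(0)}_n=X$.

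Meagerness does not rescue this. In $\mathbb{R}^2$, attach to each $(1/k,0)$ the sequence $(1/k,2^{-k}/j)_{j\ge1}$. This gives a compact $Z$ in which the base sequence $C$ is closed and nowhere dense. One checks $E(C,\tfrac1m)\ge\sqrt m-1$ and $E(Z,\tfrac1m)=O(\sqrt m)$, so $\lambda(C)>0$, and the same chain inside $C$ fails in the same way. The same example shows that $\lim_n\lambda(F_n)$ depends on the ascending representation, since finite initial segments of $C$ give $0$. So your appeal to Lemma 3.5 for the second equality is not safe either.

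Your fallback of redefining $(8)$ through chosen ascending representatives would change the statement rather than prove it. Also, $(6)$ controls ascending unions, whereas the failure occurs at the descending intersections taken at limit ordinals.
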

					
					\emph{Proof.} Observe that by $(7)$ we get 
					 $\lambda_2(F^{(\alpha)}) = \lambda_2(F^{(\alpha +1)}) +\\ \lambda_2(F^{(\alpha + 1)} \setminus F^{(\alpha)})$ 
					for every successor ordinal 
					 $\alpha$. This allows us to define inductively, for the first limit ordinal $\beta_1$, the convergent series:
					
					    $$ \sum_{n=1}^{\infty} \lambda_2(F^{\alpha_{n+1}}_{\beta_1} \setminus F^{\alpha_{n}}_{\beta_1}) $$
							
							such that $\lambda(F^{(0)}) = \sum_{n=1}^{\infty} \lambda_2(F^{\alpha_{n+1}}_{\beta_1} \setminus F^{\alpha_{n}}_{\beta_1})
							 + \lambda_2(F^{\alpha_{1}}_{\beta_2})$, where 
							 $F^{\alpha_{1}}_{\beta}$ is possibly empty. Now, the rest of the proof follows by induction over the countable
							sequence of ordinals $(\beta_m)$, taking into an account the fact that $\gamma$ might be regular. Also, we use the fact
							that the number $\lambda_3(F^{0})$ is determined entirely by the order of sequence $(F^{(\alpha)})_{\alpha \leq \gamma}$. 
							 $\Box$
						\vspace{1cm}
                        
						The above series of lemmas allows us to define the $\lambda$-\emph{flow}:
						
						\begin{defin} Let $X$ be a compact metric space and let
						  $(F^{(\alpha)})_{\alpha \leq \gamma}$ be a decreasing sequence of $F_{\sigma}$ sets stabilizing at a countable ordinal $\gamma$,
						 then the number
						
						$$(9) \ \ \ \ \ \  \lambda_T(X) :=  \lambda_2(X \setminus F^{(0)}) + \lambda_3(F^{(0)}) $$
						
						is called the $\lambda$-flow determined by $F^{(\alpha)}_{\alpha \leq \gamma}$.  \end{defin}
						
						The following theorem is fundamental in the proof of the Banach-Ulam conjecture and in our solution of the Talagrand's problem:
						
						\begin{thm} Let $X$ be a compact metric space and let $\gamma$ be a countable ordinal, 
						 then the $\lambda$-flow is independent of the choice of $\gamma$ and the 
						decreasing sequence $F^{(\alpha)}_{\alpha \leq \gamma}$ stabilizing at $\gamma$ and is equal $\lambda(X) = 1$. \end{thm}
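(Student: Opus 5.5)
The plan is to reduce the $\lambda$-flow to the content $\lambda$ by telescoping. The whole decomposition in $(9)$ should then collapse to $\lambda(X)$, and the independence from $\gamma$ and from the chosen sequence will follow automatically once the value is identified as $1$.

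\emph{Step 1: identify $\lambda_3$.} By Lemma 3.6, for any decreasing sequence $(F^{(\alpha)})_{\alpha\le\gamma}$ of $F_\sigma$ sets stabilizing at $\gamma$, we have $\lambda_3(F^{(0)}) = \lambda_2(F^{(0)}) = \lim_{n\to\infty}\lambda(F^{(0)}_n)$. The value of $\lambda_3(F^{(0)})$ is therefore already independent of $\gamma$ and of the intermediate terms $F^{(\alpha)}$, $0<\alpha\le\gamma$. It depends only on $F^{(0)}$, and by Lemma 3.5 it does not depend on the presentation $F^{(0)}=\bigcup_n F^{(0)}_n$ either. Theorem 2.2 (via Lemma 2.1) guarantees that such stabilizing sequences exist with $\gamma$ countable. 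So $(9)$ becomes
$$\lambda_T(X)=\lambda_2(X\setminus F^{(0)})+\lambda_2(F^{(0)}).$$

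\emph{Step 2: show this equals $\lambda(X)$.} I would use the additivity relation $(7)$ with $F=X$, presented as the trivial $F_\sigma$ set $X=\bigcup_n X$. In that relation the set $F^{(0)}$ plays the role of the meager part $F^{(1)}$, with $\lambda_2(X\setminus F^{(0)})$ interpreted, as in the notation preceding $(7)$, as the saturation-plus-singularity part $b_F+c_F$. The computation before $(7)$, based on Lemma 3.1 applied to $U=X$ and the cumulative unions $K_n$ of the pieces of $F^{(0)}$ as in $(3)$--$(4)$, gives $a_F+b_F+c_F=\lambda_1(X)=\lambda(X)$. Combined with $\lambda_2(F^{(0)})=a_F$, this yields $\lambda_T(X)=\lambda(X)=1$ by property (a). Throughout I would check that the limits of the three monotone sequences $a_n,b_n,c_n$ can be added; this holds because each is bounded by $\lambda(X)$.

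\emph{Step 3: independence.} Since every admissible choice of $\gamma$ and of $(F^{(\alpha)})$ produces the value $\lambda(X)$, the flow does not depend on these choices. Lemma 3.4 is what makes the rearrangements of the countably many pieces in $(8)$, indexed by the limit ordinals $\beta_m$ and the successors $\alpha^m_n$, give a convergent sum independent of the order. I would state this explicitly, using the fact that all terms are nonnegative, so that for an infinite sum over $K=\infty$ the reindexing is harmless.

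The main obstacle I expect is Step 2: justifying that $\lambda_2(X\setminus F^{(0)})$, which is defined only notationally as $b_F+c_F$, really is the complement of $\lambda_2(F^{(0)})$ in $\lambda(X)$ when $F^{(0)}$ is an arbitrary meager $F_\sigma$ set rather than a boundary. I would first try to reduce to Lemma 3.1 through the open sets $V_n=X\setminus K_n$, and then pass to the limit using the monotonicity from Lemma 3.2.
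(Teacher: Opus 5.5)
Your proposal is essentially the paper's proof. The paper also invokes Lemma 3.6 to replace $\lambda_3(F^{(0)})$ by $\lambda_2(F^{(0)})$, and then uses relation (7), with $X$ in the role of $F$ and $F^{(0)}$ in the role of the meager part $F^{(1)}$, to get $\lambda_2(X\setminus F^{(0)})+\lambda_2(F^{(0)})=\lambda(X)=1$. The Step 2 obstacle you anticipate is handled in the paper by the notational definition $\lambda_2(F\setminus F^{(1)}):=b_F+c_F$ preceding (7), which makes that step an identity, so the real weight, for you as for the paper, rests on Lemma 3.6.
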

						
						\emph{Proof.} Consider a decreasing sequence $F^{(\alpha)}_{\alpha \leq \gamma}$ stabilizing at $\gamma$. Then, by $(7)$ and
						Lemma 3.6, we get 
						$\lambda_2(X \setminus F^{(0)}) + \lambda_3(F^{(0)}) = \lambda_2(X \setminus F^{(0)}) + \lambda_2(F^{(0)}) = \lambda(X).$  $\Box$
						
						 \begin{cor} The $\lambda$-flow is congruence invariant. \end{cor}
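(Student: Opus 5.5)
The quickest route goes through Theorem 3.8. It says that for every compact metric space $X$, every countable ordinal $\gamma$, and every decreasing sequence $(F^{(\alpha)})_{\alpha\leq\gamma}$ of $F_\sigma$ sets stabilizing at $\gamma$, the $\lambda$-flow satisfies $\lambda_T(X)=\lambda(X)=1$. Congruence invariance should therefore follow by comparing the flow computed from such a sequence with the flow computed from its image under a congruence. Both numbers equal $1$, but this comparison only means something if the image data is again admissible for Definition 3.7, so that is what has to be checked.

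I would fix a congruence $\sigma$ defined on a set $D\subset X$ containing the relevant pieces, and consider the transported sequence $(\sigma(F^{(\alpha)}))_{\alpha\leq\gamma}$. The steps are:
\begin{enumerate}[(1)]
\item Check that the transported sequence is admissible. Since $\sigma$ is an isometry of $D$ onto $\sigma(D)$, it is a homeomorphism between these subspaces. It therefore carries closed subsets of $D$ to closed subsets of $\sigma(D)$, and by compactness of the pieces $F^{(\alpha)}_n$ their images are compact, hence closed in $X$. Hence $F_\sigma$ sets go to $F_\sigma$ sets. Inclusions are preserved, so the sequence is still decreasing. Nowhere density and meagerness are preserved relative to the subspaces $F^{(\alpha)}_n$, which is the only form in which they enter $(5)$--$(7)$. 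Finally, $\sigma(\emptyset)=\emptyset$, so the sequence stabilizes at the same ordinal $\gamma$.
\item Compare the flows term by term. The quantities building the flow are $\lambda$ by $(e)$, the restricted singularities and saturations (Lemma 3.2), $\lambda_1$ (Lemma 3.3) and $\lambda_2$ (Lemma 3.5). All of these are defined through distances $d(x,F)$, closed $\frac1n$-neighbourhoods and $\lambda$ of compact sets, and each is congruence invariant. The series $(8)$ defining $\lambda_3$ is built from $\lambda_2$ of differences of consecutive members of the sequence, re-indexed only by the ordinal structure, which $\sigma$ does not change. So each summand of $(8)$ and $(9)$ for $\sigma(F^{(\alpha)})$ equals the corresponding summand for $F^{(\alpha)}$.
\item Conclude. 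Summing gives $\lambda_T$ computed from $\sigma(F^{(\alpha)})$ equal to $\lambda_T$ computed from $F^{(\alpha)}$. Independently, Theorem 3.8 gives that both equal $1$, so the invariance also holds trivially once admissibility is known.
\end{enumerate}

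The main obstacle is step (1) when $\sigma$ is defined only on a subset $D$ rather than on all of $X$. Images of closed sets need not be closed in $X$, and neighbourhoods $\overline{K(F,\frac1n)}$ need not lie in $D$. For the first issue, I would rely on the compactness of the pieces $F^{(\alpha)}_n$, which makes their images compact and hence closed in $X$. For the second, I would reduce to the convention used in Lemma 3.2, namely congruences defined on the closures involved. Failing that, I would simply invoke Theorem 3.8 directly for the image space: the flow there equals $1$ independent of the sequence, which already yields the equality without needing the neighbourhoods to be transported.
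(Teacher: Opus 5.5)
Your argument is essentially the paper's. The paper's proof only observes that a congruence $\sigma$ restricts to a homeomorphism of each $F^{(\alpha)}$ onto $\sigma(F^{(\alpha)})$, so the transported sequence is again admissible, and then applies Theorem 3.8 to get that both flows equal $\lambda(X)=1$; this is your step (1) plus the final remark of step (3), while your term-by-term comparison in step (2) is unnecessary and, as you suspect, doubtful for congruences defined only on the pieces, since the neighbourhoods $\overline{K(F,\frac1n)}$ leave the domain of $\sigma$.
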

						
						\emph{Proof.} Let $F^{(\alpha)}_{\alpha \leq \gamma}$ be a decreasing sequence stabilizing at $\gamma$ 
						It is enough to observe that congruences $\sigma$ are homeomorphisms from $F^{(\alpha)}$ onto $\sigma(F^{(\alpha)})$ and 
						 apply Theorem 3.8. $\Box$
						\vspace{1cm}
                        
						 \indent Recall that by a geometrization of the compact metric space $X$ we mean the set 
						 $\mathcal{G} =\{ \mathcal{C}, \mathcal{I}\}$, where $\mathcal{C}$ is the set of all congruences defined on the closed 
						 subsets of $X$ and $\mathcal{I}$ is the set of invariants of the congruences in $\mathcal{C}$. We have the straightforward
							corollary to our Theorem 3.8:
							
							\begin{cor} Let $X$ be a compact metric space, then the set 
							 $$\{ \mathcal{C}, sing, sat, \lambda, \lambda_1, \lambda_2, \lambda_3, \lambda_T(X) \}$$
								is a geometrization of $X$.  \end{cor}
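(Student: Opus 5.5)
The plan is to unwind the definition of geometrization: $\mathcal{G}=\{\mathcal{C},\mathcal{I}\}$ with $\mathcal{I}$ a set of invariants of the congruences in $\mathcal{C}$. So it suffices to check that each set function in the list is unchanged when its argument is replaced by its image under a congruence $\sigma\in\mathcal{C}$ defined on the relevant closed set. Since $\mathcal{C}$ itself is given, nothing needs to be shown about it.

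I will go through the list in order of construction, citing the earlier results.
\begin{enumerate}
\item For $\lambda$, invariance is property $(e)$ of Section 2 (Mycielski's Lemma A).
\item For $sing$, and also for $sing_{\overline{U}}$, invariance is the last assertion of Lemma 3.2. The underlying reason is that $\sigma$, being an isometry, maps $\overline{K(F,\frac1n)}\cap \overline{U}$ onto $\overline{K(\sigma(F),\frac1n)}\cap\sigma(\overline{U})$ whenever $\sigma$ is defined on $\overline{U}$. Then $(e)$ applies term by term in $(1)$ and $(2)$.
\item For $\lambda_1$, invariance is Lemma 3.3.
\item For $\lambda_2$, invariance is Lemma 3.5.
\item For $\lambda_3$, I will use Lemma 3.6, which gives $\lambda_3(F^{(0)})=\lambda_2(F^{(0)})$. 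Invariance of $\lambda_3$ then follows from that of $\lambda_2$, once one notes that $\sigma$ carries a decreasing sequence $(F^{(\alpha)})_{\alpha\le\gamma}$ stabilizing at $\gamma$ to the decreasing sequence $(\sigma(F^{(\alpha)}))_{\alpha\le\gamma}$, which also stabilizes at $\gamma$.
\item For $\lambda_T(X)$, invariance is Corollary 3.9, itself a consequence of Theorem 3.8, since $\lambda_T(X)=1$ independently of the chosen sequence.
\end{enumerate}

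The one item not explicitly covered by an earlier lemma is $sat$. The plan there is to use the same device as for $sing$. Take a congruence $\sigma$ defined on $\overline{U}$ for which $\sigma(U)$ is open with $\partial\sigma(U)=\sigma(\partial U)$; these are the congruences ``defined on the closures'' in the sense of Lemma 3.2. For such $\sigma$, the equality $d(\sigma x,\sigma(\partial U))=d(x,\partial U)$ for $x\in\overline{U}$ shows that $\sigma$ maps $S_n(U)$ onto $S_n(\sigma(U))$. Property $(e)$ then gives $\lambda(S_n(U))=\lambda(S_n(\sigma(U)))$ for every $n$, and passing to the limit gives $sat(U)=sat(\sigma(U))$. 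Alternatively, and perhaps more cleanly, the identity of Lemma 3.1,
$$sat(U)=\lambda(\overline{U})-\lambda(\partial U)-sing_{\overline{U}}(\partial U),$$
expresses $sat$ through quantities already known to be invariant, so its invariance is inherited directly.

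I expect the main obstacle to be the $sat$ step, and the boundary-compatibility requirement in particular. A congruence is only a partial isometry on a closed set, and $\sigma(U)$ need not be open in $X$. I will therefore restrict the admissible $\sigma$ for $sat$, and likewise for the restricted singularity, to those respecting $\overline{U}$ and $\partial U$, exactly as in Lemma 3.2. I would first try the Lemma 3.1 identity route, since it reduces $sat$ to invariants already established. With that convention fixed, the corollary follows by collecting the cited invariance statements.
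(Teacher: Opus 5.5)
Your overall strategy matches the paper's. The paper gives no argument beyond calling this a ``straightforward corollary'' of Theorem 3.8, so it implicitly rests on $(e)$, Lemmas 3.2, 3.3, 3.5 and Corollary 3.9. Your treatment of $sat$, which the paper never addresses, is correct for congruences $\sigma$ defined on $\overline{U}$ with $\sigma(U)$ open. In that case $\partial\sigma(U)=\sigma(\partial U)$ is automatic, because $\sigma(\overline{U})=\overline{\sigma(U)}$ by compactness and $\sigma$ is injective.

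\textbf{The gap: item 2 for the unrestricted singularity $(1)$.} Your isometry argument only transports sets lying in the domain of $\sigma$, i.e.\ the truncated balls of $(2)$. It covers $(1)$ only when $\sigma$ is defined on all of $X$ (take $\overline{U}=X$), that is, for global isometries. For a partial congruence, $\overline{K(F,\frac{1}{n})}$ leaves the domain of $\sigma$, and its content depends on what $X$ looks like near $F$, which $\sigma$ does not see. In fact the invariance is false:
\begin{itemize}
\item Take $X=\{0\}\cup\{\frac{1}{k}:k\geq 1\}$. Finite sets have content $0$. The ball $\overline{K(\{0\},\frac{1}{n})}$ is cofinite, so by $(d)$ it has content $1$, giving $sing(\{0\})=1$. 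But $\{1\}$ is isolated, so $sing(\{1\})=0$. Yet $\{0\}\mapsto\{1\}$ is a congruence defined on a closed set. The paper's own remark at the end of Section 5, that the points of positive singularity are the last Cantor--Bendixson points, exhibits exactly this non-invariance.
\item Restricting to closures of open sets, as in Lemma 3.2, does not help. Let $A=\{0\}\cup\{\frac{1}{j^2}:j\geq 1\}$ and adjoin to $X$ the translate $A'=A+10$. Then $A$ and $A'$ are congruent closures of open sets. Since $E(A,t)=O(t^{-1/3})$ while $E(X,t)\geq c\,t^{-1/2}$, we get $\lambda(A)=\lambda(A')=0$. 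Small balls around $A$ are cofinite in $X\setminus A'$, which has content $1$, so $sing(A)=1$; but $sing(A')=0$.
\end{itemize}
So citing the last assertion of Lemma 3.2 cannot close this step, because that assertion is itself wrong for $sing$.

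\textbf{What your argument does establish.} It proves invariance of the restricted singularity: for $\sigma$ defined on $\overline{U}$, $sing_{\overline{U}}(F)=sing_{\sigma(\overline{U})}(\sigma(F))$. Together with Lemma 3.1 (which is correct), this gives your $sat$ step. The corollary can therefore only be obtained with $sing$ replaced by $sing_{\overline{U}}$, or with $\mathcal{C}$ cut down to surjective isometries of $X$.

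\textbf{A further caveat on $\lambda_2$.} Your $\lambda_2$ item leans on the well-definedness part of Lemma 3.5. The proof of that lemma, via Lemma 3.4, only compares re-orderings of one fixed family $\{F_n\}$, not different increasing exhaustions of the same set. It is immediate from $(e)$ that $\sigma$ preserves the value attached to a \emph{given} exhaustion. That alone does not make $\lambda_2$ an invariant set function.
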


\section{The Banach-Ulam conjecture and\\
 Talagrand's problem}

 \begin{thm} Let $X$ be a compact metric space, then there is no positive integer $n$ such that

       $$ (10) \ \ \ \ \ \  (n+1)[X] = n[X] \ \ \ \textrm{in}\ \  Reg(X).$$ \end{thm}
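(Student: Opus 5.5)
The plan is to argue by contradiction and use the $\lambda$-flow as a finitely additive, congruence-invariant functional on the finitely many pieces of a hypothetical paradoxical decomposition. Suppose $(n+1)[X]=n[X]$ in $\mathcal{S}(Reg(X))$. Unwinding the Tarski equidecomposability gives:
\begin{itemize}
\item finitely many regular sets $A_1,\dots,A_k\subset X$, each carrying a level index;
\item congruences $\sigma_1,\dots,\sigma_k$ with each $\sigma_i$ defined on $A_i$;
\item the property that the $A_i$ (with their levels) partition $n+1$ copies of $X$, while the $\sigma_i(A_i)$ (with the permuted levels) partition $n$ copies of $X$.
\end{itemize}
The aim is to attach to every regular set $A$ occurring in this finite configuration a number $\nu(A)\geq 0$ with three properties: $\nu$ is additive on the finite partitions of $X$ involved, $\nu(\sigma_i(A_i))=\nu(A_i)$, and $\nu(X)=1$. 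Summing then gives $n+1=n$, a contradiction.

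\textbf{Step 1: a common transfinite stratification.} Take the finite family $\mathcal{P}$ consisting of all $A_i$, all $\sigma_i(A_i)$, and their complements. By regularity, $X\cap A=V_A\triangle M_A$ with $V_A$ regular open and $M_A$ meager; enclose each $M_A$, together with $\partial V_A$, in a meager $F_\sigma$ set. Let $F^{(0)}$ be the union of these sets over $A\in\mathcal{P}$. Since each $\sigma_i$ is an isometry, we may also close $F^{(0)}$ under the $\sigma_i$ and $\sigma_i^{-1}$ (where defined). This is a countable iteration, so $F^{(0)}$ stays a meager $F_\sigma$.

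Next write $F^{(0)}=\bigcup_n F_n$ with closed nowhere dense $F_n$. Regularity says each $F_n\cap A$ again has the Baire property in $F_n$. Repeating the construction inside each $F_n$ (relative boundaries plus meager parts in the subspace topology) produces $F^{(1)}$, meager $F_\sigma$ in $F^{(0)}$ in the sense of Theorem 2.2. Iterating, with intersections at limit stages, gives a decreasing sequence $(F^{(\alpha)})$ satisfying the hypotheses of Theorem 2.2. Hence it stabilizes at a countable ordinal $\gamma$ with $F^{(\gamma)}=\emptyset$. By construction, on each layer $F^{(\alpha)}\setminus F^{(\alpha+1)}$ every $A\in\mathcal{P}$ agrees with a relatively open set, and the layers are permuted by the $\sigma_i$.

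\textbf{Step 2: defining $\nu$.} For $A\in\mathcal{P}$, define $\nu(A)$ by the same formula as the $\lambda$-flow in Definition 3.7:
\begin{itemize}
\item use $\lambda_2$ of $(X\setminus F^{(0)})\cap A$, computed via the regular open part $V_A$, with saturations and restricted singularities as in Lemma 3.1 and (3)--(4);
\item add the layer contributions of (8), with each layer term restricted to the relatively open trace of $A$ on that layer.
\end{itemize}
Finite additivity on disjoint members of $\mathcal{P}$ should come layer by layer from the additivity statements (d), Lemma 3.2, and the decomposition (7). The key point is that disjoint relatively open sets on a layer have disjoint interiors, and their boundaries have been pushed into the next layer. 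Summing over a partition of $X$ then recovers $\lambda_T(X)$, which equals $1$ by Theorem 3.8. Invariance $\nu(\sigma_i(A_i))=\nu(A_i)$ should follow from Corollary 3.9 and Lemmas 3.3 and 3.5. The decisive fact is that $\sigma_i$ maps the stratification of $A_i$ onto that of $\sigma_i(A_i)$, because the stratification was made $\sigma_i$-closed in Step 1. This is exactly the ``no telescoping'' phenomenon described in the introduction: comeager parts of each piece lie in the domain of its congruence on every layer.

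\textbf{Step 3: conclusion and main obstacle.} Counting levels gives $\sum_i\nu(A_i)=n+1$ and $\sum_i\nu(\sigma_i(A_i))=n$, contradicting invariance. The main obstacle is Step 2, specifically proving that $\nu$ is genuinely finitely additive on pieces that are merely relatively open on each layer rather than closures of open sets. Lemmas 3.2--3.6 are stated for closures of open sets and for whole $F_\sigma$ strata. I would first try to reduce to that case by replacing each trace of $A$ on a layer with the closure of its relative interior. The difference lies in the next layer and is therefore accounted for there. I would then check that Lemma 3.4 makes the resulting sums independent of the chosen enumerations.
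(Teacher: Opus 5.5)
\textbf{There is a genuine gap in Step 2.} You have located the difficulty correctly, but it is not a technical obstacle: it is the whole theorem. A functional $\nu\ge 0$ on the pieces that is additive on the partitions involved, satisfies $\nu(\sigma_i(A_i))=\nu(A_i)$, and has $\nu(X)=1$ exists for every configuration exactly when no paradox exists. One direction is your Step 3; the other is Theorem 2.1.

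Section 3 does not supply such a $\nu$. The $\lambda$-flow of Definition 3.7 is a single number attached to $X$ and a stratification. Theorem 3.8 returns $\lambda(X)$ essentially by construction, since in (7) the term $\lambda_2(F\setminus F^{(1)})$ is \emph{defined} to be $b_F+c_F$. It is not a set function on regular sets. The additive, invariant quantities that are available ($\lambda$ on disjoint compacta, inner content on disjoint relatively open sets) miss precisely the singularity terms, and nothing tells you how to apportion those among pieces.

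Test your recipe on $X=\{0\}\cup\{1/k:k\ge1\}$ with $P_1=\{0\}\cup\{1/(2k):k\ge1\}$ and $P_2=\{1/(2k+1):k\ge0\}$. Here $\lambda$ is $0$ on finite sets and $1$ on cofinite closed sets. Every compact subset of a single stratum (the isolated points, or $\{0\}$) is finite. So without singularity terms you get $\nu(P_1)+\nu(P_2)=0$, not $1$. If instead you use $\lambda$ of closures of relative interiors, as you propose, you get $\lambda(\overline{V_1})+\lambda(\overline{V_2})$ with $V_1=P_1\setminus\{0\}$ and $V_2=P_2$. An entropy count gives $2^{-1/2}$ for each: both sets have gaps $\approx 2x^2$ near $x$, where $X$ has gaps $\approx x^2$, and $E(X,t)\sim c\,t^{-1/2}$. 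So the total is $\sqrt2$. The same numbers refute Lemma 3.2, which you invoke: $\mathrm{sing}(\overline{V_1})=1-2^{-1/2}>0=\mathrm{sing}(X)$.

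\textbf{The invariance half fails as well}, because congruences need not respect the topology of $X$. In the same space, the congruence $1/2\mapsto 0$ (defined on $\{1/2\}$) sends an open point to the nowhere dense point $0$. That point lies in $F^{(0)}$ whenever it is on the boundary of some $V_A$. Closing $F^{(0)}$ under the inverse map, as in your Step 1, adds the isolated point $1/2$, and $F^{(0)}$ is no longer meager. Without that closure, the $\sigma_i$ move points between strata. So ``the layers are permuted by the $\sigma_i$'' is false, and layerwise invariance of $\nu$ does not follow from Corollary 3.9 or Lemmas 3.3 and 3.5.

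Nor can you borrow the missing step from the paper: the Claim in its proof of this theorem makes the same leap. It extends each $\sigma_{ij}$ to $\overline{U^{F^{(\alpha)}}_{ij}}$ and adds up $\lambda_2$-values of the images to obtain $n$ on $n+1$ copies. This breaks down in three places:
\begin{itemize}
\item The extensions act on points outside $P_{ij}$, so disjointness of the images is not justified.
\item The closures overlap along boundaries.
\item The images are never shown to form a stratification of $Y_{n+1}$ in the sense of Definition 3.7.
\end{itemize}
Hence that number is not a $\lambda$-flow of $Y_{n+1}$, and Theorem 3.8 yields no contradiction. What is missing, in your proposal and in the paper alike, is an actual finitely additive, congruence-invariant functional on regular pieces.
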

			
			\emph{Proof.} Suppose to the contrary that there exists a positive integer $n$ satisfying $(10)$. Therefore, there exist
			$n$ disjoint copies of $X$ and then aditional $(n+1)$ copies of $X$ with the Tarski's operations between the $n$ copies and
			the $n+1$ copies. So, let $\{P_{ij}\}_{i=1}^{k_j}$ be a regular partition of the $j$-th copy, where $j=1,2,...,n$. Then, there
			are congruences $\sigma_{ij}$ such that $\{\sigma_{ij}(P_{ij})\}_{i=1}^{k_j}$, $j = 1,...,n$ forms a regular partition of $n+1$
			copies of $X$. In what follows we extend the $\lambda$-flow, defined initially on $X$, to the disjoint copies of $X$ and treat
			each of the flows independently for each of the copies. Thus, for every positive integer $n$, Theorem 3.8 allows us to use the
			independence and define for the extended $\lambda$-flow the number $\lambda_{T}(Y_n) = n \cdot \lambda(X)$, where $Y_n$ is the
			 union of $n$ disjoint copies of $X$.
			We will show that the $n$ regular partitions of $n$ copies of $X$ and the congruence invariance of the extended $\lambda$-flow,
			induces by $(10)$ a regular partition on $n+1$ copies of $X$ with extended $\lambda$-flow that equals $n \cdot \lambda_T(X) = n$.
			On the other hand, Theorem 3.8 implies that that every extended $\lambda$-flow defined by any regular partition of $n+1$ copies of $X$
			 must equal $(n+1) \cdot \lambda_T(X) = n+1$, thus giving the desired contradiction.\\
       \indent Recall that every regular partition $\{P_i\}_{i=1}^k$ defines pairwise disjoint regular open subsets $U_i$ and the boundaries
			 $\partial{U_i}$ of the $U_i$ such that $\bigcup_{i=1}^k \partial{U_i}$ is a nowhere dense closed subset of $X$ and that
			 $X \setminus \bigcup_{i=1}^k \partial{U_i} = \bigcup_{i=1}^k U_i$. Since $P_i = U_i \triangle M_i$, where 
			  $M_i= \bigcup_{n=1}^{\infty} N^{i}_n$ and $N^{i}_n$ is a nowhere dense subset of $X$, we can define $\overline{N^{i}_n}$ that
				is also nowhere dense and then a meager $F_{\sigma}$ set $\widehat{M_i} = \bigcup_{n=1}^{\infty} \overline{N^{i}_n}$.
				 Hence we obtain the following $F_{\sigma}$ set: $F = \bigcup_{i=1}^k \partial{U_i} \cup \bigcup_{i=1}^k \widehat{M_i}$.
				 Clearly, we have $\bigcup_{i=1}^k \partial{U_i} \cup \bigcup_{i=1}^k {M_i} \subset F$ and the sets in
				 $\{U_i\setminus F\}_{i=1}^k$ are pairwise disjoint.\\
				 \indent Now, using the cumulative sums of elements in  $\{\partial{U_i}\}_{i=1}^k$ and the ones defining $\widehat{M_i}$
				for $i=1,...,k$ we can express $F$ as $\bigcup_{n=1}^{\infty} F_n$ with $F_n \subset F_{n+1}$ and $F_n$ being closed 
				 and nowhere dense.\\
				 \indent Since $\{P_i\}_{i=1}^k$ is regular, then $P_i \cap F_n$ has the property of Baire
				 for any positive integer $n$ and thus $P_i \cap F_n = U^{F_n}_i \triangle M^{F_n}_i$, where
				$M^{F_n}_i = \bigcup_{m=1}^{\infty} F^{F_n}_m$ is meager set in the subspace
					topology of $F_n$. Also, the set $\widehat{M^{F_n}_i} = \bigcup_{m=1}^{\infty} \overline{F^{F_n}_m}$ is a meager $F_{\sigma}$ 
					subset of $F_n$.
					We obtain also in this way the sets $U^{F_n}_i$ that are pairwise disjoint regular open in the subspace topology of $F_n$,
				 and the boundaries $\partial{U^{F_n}_i}$ of these open subsets, also contained in $F_n$. Thus we get
				
				 $$  F_n = \bigcup_{i=1}^k U^{F_n}_i \cup  \bigcup_{i=1}^k  \partial{U^{F_n}_i},$$
				
				 where $\bigcup_{i=1}^k U^{F_n}_i$ and $\bigcup_{i=1}^k  \partial{U^{F_n}_i}$ are disjoint. In this way we obtain a meager
				 $F_{\sigma}$ subset $A$ of $F$:
				
				         $$   A = \bigcup_{n=1}^{\infty} \bigcup_{i=1}^k (\partial{U^{F_n}_i} \cup\widehat{M^{F_n}_i}).$$
		      
             We obtain also the pairwise disjoint sets $U^F_i = \bigcup_{n=1}^{\infty} U^{F_n}_i$ 
						open in the subspace topology of $F$, disjoint with $\bigcup_{n=1}^{\infty} \bigcup_{i=1}^k \partial{U^{F_n}_i}$
						and such that

						$$(11) \ \ \ \ \ \  \bigcup_{n=1}^{\infty} \bigcup_{i=1}^k \partial{U^{F_n}_i} \cup \bigcup_{i=1}^k U^F_i = F$$.

							\indent Put now, $F^{(0)} = F$ and $F^{(1)} = A$. It is clear that we can continue in the way as above, using the regularity
							 of $\{P_i\}_{i=1}^k$ to get a decreasing transfinite sequence $(F^{(\alpha)})$ such that $F^{\alpha+1}$ is 
							 a meager $F_{\sigma}$ subset 
							 of $F^{\alpha}$, and for the limit ordinals $\beta$, we get that $ F^{\beta} = \bigcap_{\alpha < \beta} F^{\alpha}$
							 which is also a meager $F_{\sigma}$ subset of any $F^{(\alpha)}$. Therefore the assumptions of Theorem 2.3 are satisfied
							 and there exists a countable ordinal $\gamma$ such that $F^{\gamma} = \emptyset$. And hence we can apply Theorem 3.8
							 and thus we obtain that the $\lambda$-flow determined by each of the regular partition $\{P_{ij}\}_{i=1}^{k_j}$ of $X$ equals 
							  $ \lambda(X) = 1$. But the $n$ partitions $\{P_{ij}\}_{i=1}^{k_j}$ form jointly a partition of $n$ disjoint
								 copies of $X$. And therefore we get that the extended $\lambda$-flow determined by the partitions equals 
								 $n \cdot \lambda(X) =n$.\\
								 \indent Our next step is to observe that the extended $\lambda$-flow defined on $n$ copies of $X$ and the 
								 fact that $(10)$ holds,
								 imply that the following claim is true:
								
								\textbf{Claim.} $(10)$ implies the existence of an extended $\lambda$-flow on $(n+1)$ disjoint copies of $X$ that equals
								  $n \cdot \lambda(X)$.
									\vspace{1cm}
                                    
	\emph{Proof.} Observe that for each step $\alpha$ of the construction of the meager $F_{\sigma}$ sets 
	$F^{(\alpha)}_j$ in the $j$-th copy of $X$ ($j=1,...,n$), we obtain the pairwise disjoint subsets $U^{F^{(\alpha)}}_{ij}$ open in the subspace topology of $F^{(\alpha)}_j$. 
	Since every congruence extends from its domain to the closure, each of the sets $\overline{U^{F^{(\alpha)}}_{ij}}$
	is a domain for some congruence $\sigma_{ij}$ (here we extend from the comeager set defining 
	  $U^{F^{(\alpha)}}_{ij}$). 
	Also, $\sigma_{ij}$ is a homeomorphism from $\overline{U^{F^{(\alpha)}}_{ij}}$
	onto $\sigma_{ij}(\overline{U^{F^{(\alpha)}}_{ij}})$. Therefore, $\sigma_{ij}(U^{F^{(\alpha)}}_{ij})$ is an open subset of $\sigma_{ij}(\overline{U^{F^{(\alpha)}}_{ij}})$. Moreover, the set $\bigcup_{i=1}^k U^{F^{(\alpha)}}_{ij}$ is comeager
		 and thus dense in $F^{(\alpha)}_j$. Also, by the congruence-invariance of $\lambda$ we get
		 $\lambda(\sigma_{ij}(\overline{U^{F^{(\alpha)}}_{ij}})) = \lambda(\overline{U^{F^{(\alpha)}}_{ij}}).$\\
		\indent Since the sets $U^{F^{(\alpha)}}_{ij}$ are pairwise disjoint then, by $(10)$, the relatively open
		 sets $\sigma_{ij}(U^{F^{(\alpha)}}_{ij})$ are pairwise disjoint too. Thus we obtain that 
										
		 $$ \bigcup_{\alpha < \gamma} \bigcup_{i,j} \sigma_{ij}(U^{F^{(\alpha)}}_{ij} \cup K^{\alpha}) = Y_{n+1},$$
											
		where $K^{\alpha}$ is the union of boundaries of $U^{F^{(\alpha)}}_{ij}$ and $Y_{n+1}$ is the union of $(n+1)$
		disjoint copies of $X$. In this way, we obtain a comeager subset $C^{(\alpha)}$ of 
		$\bigcup_{i,j} \sigma_{ij}(U^{F^{(\alpha)}}_{ij})$ such that   
		$C^{(\alpha)} = \bigcup_{i,j} \sigma_{ij}(U^{F^{(\alpha)}}_{ij} \setminus F^{\alpha + 1})$.
		And since $\bigcup_{i} \overline{U^{F^{(\alpha)}}_{ij}} = F^{(\alpha)}_j$ we get by $(11)$, 
		using the facts that $\sigma_{ij}$ extends from $U^{F^{(\alpha)}}_{ij}$ to $\overline{U^{F^{(\alpha)}}_{ij}}$
		and $F^{\alpha + 1}$ is meager in $F^{\alpha }$, that
	 $\lambda_2(\overline{U^{F^{(\alpha)}_{ij}} \setminus F^{(\alpha + 1)}}) = \lambda_2(F^{(\alpha)} \setminus F^{(\alpha+1)})$. Additionally, since all the properties defining $\lambda_2$
		 and $\lambda_2(\bigcup_{i,j} \sigma_{ij}(\overline{U^{F^{(\alpha)}}_{ij}} \setminus F^{\alpha + 1}))$ are metric, and congruences preserve metric, we get
		that for every $\alpha \leq \gamma$:\\
		$\lambda_2(\bigcup_{i,j} \sigma_{ij}(\overline{U^{F^{(\alpha)}}_{ij}} \setminus F^{\alpha + 1})) =
		\lambda_2(C^{(\alpha)}) = \lambda_2(F^{(\alpha )} \setminus F^{(\alpha + 1)})$.
		Hence we get
													
		$$ \lambda_{T}(Y) = \sum_{i,j} \lambda_2(\sigma_{ij}(\overline{U_{ij}^{(0)}} \setminus F^{(0)})) +
	 \sum_{\alpha \leq \gamma} \sum_{i,j} \lambda_{2} 
(\sigma_{ij}(\overline{U^{F^{(\alpha)}}_{ij}} \setminus F^{(\alpha + 1)})) = n \cdot \lambda(X),$$
														
where $\sum_{\alpha \leq \gamma} \sum_{i,j} \lambda_{2} 
(\sigma_{ij}(\overline{U^{F^{(\alpha)}}_{ij}} \setminus F^{(\alpha + 1)}))$ is a shorthand for
	 the type of double series like defined in $(8)$.
	
    \indent Finally, we can repeat all the above arguments involving the transfinite sequence 
	 of $F_{\sigma}$ meager sets
and the $\lambda$-flows determined by any $(n+1)$ regular partitions of $n+1$ copies of $X$. In this way
we obtain that any such regular partition defines the extended $\lambda$-flow that is 
equal $(n+1) \cdot \lambda(X).$ Therefore we arrive to a contradiction, starting from $n$ disjoint
	 copies of $X$ and assuming $(10)$ we obtain that there exists an extended $\lambda$-flow defined by a 
	 regular partition of $n+1$ disjoint copies of $X$ that is equal to $n \cdot \lambda(X)$. 
	On the other hand, any regular partition of such $n+1$
	copies defines the $\lambda$-flow that equals $(n+1) \cdot \lambda(X)$. $\Box$
		
        \vspace{1cm}												
		The straightforward application of the Tarski's Theorem 2.1 in the case of $\mathcal{S}(Reg(X))$
		confirmes the Banach-Ulam conjecture and simultaneously solves the Talagrand's problem :
														
		\begin{thm} Let $X$ be a compact metric space, then there exists a finitely additive
		probability measure defined on the regular algebra $Reg(X)$ that is congruence invariant.\end{thm}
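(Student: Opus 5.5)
The plan is to read Theorem 4.2 off from Tarski's Theorem 2.1, applied with $\mathcal{A} = Reg(X)$, together with Theorem 4.1.

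First we check the hypotheses of Theorem 2.1. We must confirm that $Reg(X)$ is a congruence-invariant $\sigma$-algebra, a fact recorded in Section 2.

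\emph{$\sigma$-algebra.} If $A$ is regular and $F$ is closed, then $F\cap(X\setminus A) = F\setminus (F\cap A)$. This has the property of Baire because $F\cap A$ does. Countable unions are handled the same way: $F\cap\bigcup A_n=\bigcup (F\cap A_n)$, and the Baire property is preserved under countable unions.

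\emph{Congruence invariance.} Let $\sigma$ be a congruence defined on a regular set $A$. We extend $\sigma$ uniquely to an isometry $\bar\sigma$ of $\overline{A}$, so $\bar\sigma$ is a homeomorphism of $\overline A$ onto the compact set $\overline{\sigma(A)}$. For closed $F$, the set $F\cap\sigma(A)$ is the image of $\bar\sigma^{-1}(F)\cap A$. Here $\bar\sigma^{-1}(F)$ is closed in $X$, so $\bar\sigma^{-1}(F)\cap A$ has the Baire property. A homeomorphism between compact subspaces transports sets with the Baire property in the relative topology. The delicate point is to pass between the Baire property relative to a closed subspace and relative to $X$; this is what regularity (intersections with \emph{all} closed sets) is designed to supply. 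One also has to check that the pieces of $\mathcal{A}^{*}$ are closed under the Tarski operations, which is immediate level by level.

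Next, Theorem 4.1 gives condition $(i)$ of Theorem 2.1 for $\mathcal{S}(Reg(X))$: there is no $n$ with $(n+1)[X]=n[X]$. The implication $(i)\Rightarrow(ii)$ of Theorem 2.1 then produces a finitely additive, congruence-invariant $m\colon Reg(X)\to[0,\infty]$ with $m(X)=1$. By monotonicity $m$ takes values in $[0,1]$, so $m$ is a finitely additive probability measure on $Reg(X)$, which is the statement. Since $Reg(X)$ contains the Borel sets, restricting $m$ gives the Banach–Ulam measure. Since $Reg(X)$ consists exactly of Talagrand's pieces, this also rules out paradoxical decompositions with such pieces.

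The main obstacle is not this deduction but the input it relies on, namely the correctness of Theorem 4.1. In writing the proof I would simply cite Theorem 4.1 as established and keep Theorem 4.2 as the formal consequence described above. The only genuine verification left at this stage is the congruence invariance of $Reg(X)$ under partially defined congruences. I would carry this out carefully via the closure extension $\bar\sigma$, as sketched above.
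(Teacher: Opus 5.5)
Your proposal follows the paper's proof exactly: Theorem 4.2 is obtained by feeding Theorem 4.1 into the implication $(i)\Rightarrow(ii)$ of Tarski's Theorem 2.1 for $\mathcal{S}(Reg(X))$, and, as you say, all the substance rests on Theorem 4.1. Your check that $Reg(X)$ is a congruence-invariant $\sigma$-algebra, which the paper only calls easy, is sound once the Baire property of $F\cap A$ is read relative to $F$; the step from the closed subspace $F\cap\overline{\sigma(A)}$ to $F$ does not come from regularity but from the elementary fact that a set nowhere dense in a closed subspace is nowhere dense in the ambient space.
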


	\section{Geometrization and the measures} Let $(X,d)$ be a compact metric space, we will say that $X$ is \emph{regular}
								if the singularity of every closed subset of $X$ is equal $0$. In this section we will show that the regularity
								 of compact metric spaces implies the existence of congruence-invariant Banach-Mycielski measures.
								
								 \begin{thm} Let $X$ be a compact metric space, then $X$ is regular if and only if any Banach-Mycielski
								 measure agrees with the Banach-Mycielski content $\lambda$ on closed sets. \end{thm}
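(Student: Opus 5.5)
The plan is to show, for every closed $F\subset X$, the exact identity
$$\mu(F)=\lambda(F)+sing(F),$$
where $\mu$ is the Banach-Mycielski measure built from $\lambda$ via $\mu_0$ and $\mu^{*}$ as in Section 2. Once this holds, both directions are immediate. $X$ is regular iff $sing(F)=0$ for all closed $F$, iff $\mu(F)=\lambda(F)$ for all closed $F$. Since the identity does not depend on the ultrafilter $\mathcal{F}$ beyond the choice of $\lambda$, the statement holds for any Banach-Mycielski measure.

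I will use the following facts. By the Carathéodory construction, Borel sets are $\mu^{*}$-measurable; this is part of Mycielski's construction [10], which I take as given. Hence for closed $F$,
$$\mu(F)=\mu^{*}(F)=\inf\{\mu_0(U): F\subset U,\ U \textrm{ open}\}.$$
Put $U_n=\{x: d(x,F)<\frac1n\}$ and $B_n=\overline{K(F,\frac1n)}$. Then $U_n\subset B_n\subset U_{n-1}$.

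\emph{Lower bound.} For any open $U\supset F$, the set $F$ is itself a compact subset of $U$, so $\mu_0(U)\ge\lambda(F)$. Hence $\mu(F)\ge\lambda(F)$. To get the sharper bound, I use compactness of $F$: every open $U\supset F$ satisfies $d(F,X\setminus U)>0$, so $U\supset B_n$ for some $n$. Since $B_n$ is compact, $\mu_0(U)\ge\lambda(B_n)\ge\lim_k\lambda(B_k)$, using that the sequence $\lambda(B_k)$ is decreasing. Taking the infimum over $U$ gives
$$\mu(F)\ge \lim_k\lambda(B_k)=\lambda(F)+sing(F).$$

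\emph{Upper bound.} Every compact $K\subset U_n$ lies in $B_n$, so by property $(b)$ of $\lambda$ we get $\mu_0(U_n)\le\lambda(B_n)$. Therefore $\mu(F)\le\inf_n\lambda(B_n)=\lambda(F)+sing(F)$. Combining the two bounds yields the identity.

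The main step requiring care is the use of $\mu(F)=\mu^{*}(F)$ for closed $F$, that is, the measurability of closed sets in Mycielski's Carathéodory construction. Everything else only uses monotonicity $(b)$ of $\lambda$ and the compactness argument showing that open neighbourhoods of $F$ contain the metric neighbourhoods $B_n$. If that measurability needs justification here, I would first try to verify that $\mu^{*}$ is a metric outer measure. For sets at positive distance, disjoint open neighbourhoods can be chosen, and additivity of $\mu_0$ on separated open sets follows from property $(d)$ of $\lambda$ applied to disjoint compact subsets.
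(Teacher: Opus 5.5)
Your proof is correct and is essentially the paper's argument. The paper's forward direction is your upper bound $\mu(F)\le\mu_0(U_n)\le\lambda(B_n)$, and its backward direction is your lower bound: every open $U\supset F$ contains some $B_n$, so $\lambda(B_n)\le\mu_0(U)$. You package both into the identity $\mu(F)=\lambda(F)+sing(F)$, which gives the equivalence at once. Like the paper, you rely on $\mu(F)=\mu^{*}(F)$ for closed $F$, and your metric-outer-measure sketch based on property $(d)$ is a valid way to justify it.
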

								
								\emph{Proof.} For the forward direction assume that $X$ is regular. Then for every closed subset $F$ and every
								$\varepsilon > 0$ there exists a positive integer $N$ with 
								 $\lambda(\overline{K(F, \frac{1}{n})}) < \lambda(F) + \varepsilon$ for $n > N$. Now, every Banach-Mycielski
								 measure $\mu$ satisfies $\lambda(F) \leq \mu(F)$. So suppose to the contrary that $\lambda(E) < \mu(E)$ for
								 some closed subset $E$ of $X$. Then we get $\mu(E) - \lambda(E) = a >0$. But then we can take an $\varepsilon < a$
								  and thus we get 
									
									  $$\mu^{*}(E) = \mu(E) \leq \lambda(E) + \varepsilon < \lambda(E) + a$$
										
										and so $\mu(E) - \lambda(E) < a$, a contradiction. Note that above we use the following:
										
										$\lambda(\overline{K(F, \frac{1}{n})}) < \lambda(F) + \varepsilon$ implies that the 
										 $sup\{\lambda(K): K \subset K(K, \frac{1}{n}), K = \overline{K} \} \leq \lambda(F) + \varepsilon$.
									   
								   \indent For the backward direction assume that $\mu$ is a Banach-Mycielski measure defined in $X$ that 
									 agrees with $\lambda$ on closed subsets of $X$. Then for any closed subset $F$ of $X$ we have
									
									   $$\mu(F) = inf\{\mu_{0}(U): F \subset U, U \ \textrm{open in} \ X\}.$$
										
									Thus, take an open subset $U$ of $X$ with $F \subset U$ and $\mu_{0}(U) \leq \mu(F) + \varepsilon$ 
									 ($\mu_0$ is defined in Sect. 2) for some
										$\varepsilon > 0$. Then, consider the closed set $K = X \setminus U$. Clearly, $F$ and $K$ are disjoint
										 and so their distance $d(F,K) = a$ is positive. Pick now a closed ball $\overline{K(F, \frac{1}{n})}$,
										 where $n$ is a positive integer so large that $\frac{1}{n} \leq \frac{a}{2}$. We observe that, by
										 the definition of $a$, the set $\{x \in X: d(x, F) < a\}$ is contained in $U$ and so 
										 $\overline{K(F, \frac{1}{n})} \subset U$.\\
										 \indent Now, $\mu_{0}(U) = sup \{\lambda(L): L \subset U, L = \overline{L}\} \leq \mu(F) + \varepsilon$.\\
							        \indent Thus we get
											
											   $$ \lambda(\overline{K(F, \frac{1}{n})}) \leq \mu(F) + \varepsilon = \lambda(F) + \varepsilon.$$
												
		 Letting $\varepsilon \rightarrow 0$ we get $\lim_{n \to \infty} \lambda(\overline{K(F, \frac{1}{n})}) = \lambda(F)$.
				$\Box$
							\vspace{1cm}
                                                    
				The next theorem relates our geometrization of compact metric spaces with
								            the main result of Bandt-Baraki [3] and explains the construction of Davies and Ostaszewski [4]:

                             \begin{thm} Let $X$ be a regular compact metric space, then every probabilistic Banach-Mycielski measure $\mu$ is
					congruence-invariant countably additive measure. \end{thm}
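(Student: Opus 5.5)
The plan is to use Theorem 5.1 to identify $\mu$ with $\lambda$ on closed sets. From there, congruence-invariance passes from closed sets to Borel sets by regularity of the measure. Countable additivity needs no separate argument: $\mu$ is obtained from the outer measure $\mu^{*}$ by the Carath\'eodory theorem, so it is a countably additive probability Borel measure by construction. It is a probability measure because $\mu(X)=\mu_0(X)=\lambda(X)=1$.

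\textbf{Step 1: closed sets.} Let $\sigma$ be a congruence defined on a Borel set $A$. First take a closed $F\subset A$. Then $\sigma(F)$ is compact, since $\sigma$ is an isometry and hence continuous. Theorem 5.1 together with property $(e)$ of $\lambda$ gives
$$\mu(\sigma(F))=\lambda(\sigma(F))=\lambda(F)=\mu(F).$$

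\textbf{Step 2: inner regularity.} I will show that $\mu(A)=\sup\{\mu(F): F\subset A,\ F \ \textrm{closed}\}$ for every Borel set $A$. This is standard, because every finite Borel measure on a metric space is regular: the class of Borel sets that are inner regular by closed sets and outer regular by open sets contains the open sets (they are $F_\sigma$) and is a $\sigma$-algebra.

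\textbf{Step 3: Borel sets.} Let $A$ be Borel and $\sigma$ a congruence on $A$. The set $\sigma(A)$ is Borel: $\sigma$ extends to an isometry of $\overline{A}$ onto $\overline{\sigma(A)}$, and an injective continuous image of a Borel set is Borel by the Lusin--Souslin theorem. Every closed $F\subset A$ gives a closed $\sigma(F)\subset\sigma(A)$ with the same measure by Step 1, so $\mu(\sigma(A))\ge\mu(A)$. Applying the same argument to $\sigma^{-1}$, which is a congruence defined on $\sigma(A)$, gives the reverse inequality. Hence $\mu(\sigma(A))=\mu(A)$.

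\textbf{Main obstacle.} The delicate point is Step 1, which rests entirely on Theorem 5.1. Regularity of $X$, meaning $sing(F)=0$ for all closed $F$, is exactly what lets $\mu$ on a closed set be computed by the shrinking closed neighbourhoods $\overline{K(F,\frac1n)}$. Without it, $\mu(F)$ could exceed $\lambda(F)$, and the invariance $(e)$ of $\lambda$ would not transfer to $\mu$. A secondary point is whether "congruence" should be taken on arbitrary subsets; I will restrict to Borel domains, where the measure is defined.
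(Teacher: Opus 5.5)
Your proposal is correct and takes essentially the same route as the paper. Both arguments use inner regularity of the finite Borel measure $\mu$, Theorem 5.1 to replace $\mu$ by $\lambda$ on closed sets, and the congruence-invariance $(e)$ of $\lambda$ on the closed sets matched by $\sigma$. Your extra checks, that $\sigma(A)$ is Borel and the two-sided inequality via $\sigma^{-1}$, only make explicit details that the paper's proof leaves implicit.
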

														
				\emph{Proof.} Observe that since $\mu$ is finite, then it is inner regular. Thus for every Borel set we apply
				Theorem 6.1 and get: $\mu(B) = sup\{\lambda(F): F \subset B, F = \overline{F}\}$.\\
				\indent Suppose now that $C$ is congruent to $B$, that is, there exists a congruence $\sigma$ such that
			$\sigma(B) = C$. Then also $\sigma(F)$ is congruent to $F$ and since $F$ is closed we get by the congruence-
			invariance of $\lambda$ that $\lambda(\sigma(F)) = \lambda(F)$. Therefore

  \[
   \begin{split}
    \mu(\sigma(B)) &= sup\{\lambda(K): K \subset \sigma(B), K = \overline{K}\} =\\
         &=sup\{\lambda(F): \sigma(F) \subset \sigma(B), F = \overline{F}\} = \mu(B).
\end{split}
\]   $\Box$
																	
	It is interesting to observe that Banadt and Baraki proved that in the hyperspaces of compact
	subsets of given compact metric space without isolated points, equipped 
	with Hausdorff metric, there is no probabilistic 
	Banach-Mycielski that is congruence-invariant and countably additive ([3], Thm. 2). Also, there cannot
	any such measure in any countable compact metric spaces. Therefore, by Theorem 5.2, we observe that the
	aforementioned spaces must contain some closed subsets with positive singularity. In fact, it is easy to
	verify using the Cantor-Bendixson derivative, that the finite non-empty set of points in 
	the Cantor-Bendixson process, one step before it stabilizes, is just the set of points with positive
	singularity.

           \section{Conjecture of Ulam} We will show in this section an application of Theorem 6.2 to the conjecture of Ulam on the
					 product measure in the Hilbert cube $I^{\omega}$. Recall that the problem asks if the standard product Lebesgue $\mu$ is
					 congruence-invariant, where $I^{\omega}$ is treated as a compact metric space with the following metric $d_a$:
					
					   $$d_a(x,y) = (\sum_{i = 1}^{\infty}a^{2}_i(x_i - y_i))^{1/2},$$
						
						  where $(a_i)$ is a sequence of positive real numbers with $\sum_{i=1}^{\infty}a^{2}_i < \infty$. Some partial results were
							obtained by J. Mycielski [11], J. Ficket [6], and S.M Jung gave finally a quite complex solution to this conjecture in 
							 [7]. We will show that the conjecture follows from Theorem 5.2:
							
							  \begin{thm} The standard product Lebesgue $\mu$ is congruence-invariant. \end{thm}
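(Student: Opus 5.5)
The plan is to obtain the statement from Theorem 5.2. That theorem says that in a regular compact metric space every probabilistic Banach-Mycielski measure is congruence-invariant and countably additive. So two things must be checked for $(I^{\omega}, d_a)$:
\begin{enumerate}[(1)]
\item the space is \emph{regular}, meaning every closed subset has zero singularity;
\item the Banach-Mycielski measure $\mu_{BM}$ produced by the construction of Section 2 coincides with the standard product Lebesgue measure $\mu$.
\end{enumerate}
Together these give the claim, since the congruence-invariance of $\mu_{BM}$ then transfers to $\mu$.

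For (2), I will first compare $\lambda$ with $\mu$ on closed sets. The cube $I^{\omega}$ with $d_a$ is homogeneous enough that $\mu$ assigns equal mass to congruent small cylinder sets. I expect $E(K,\frac{1}{n})/E(X,\frac{1}{n})$ to approximate $\mu$ of a small neighbourhood of $K$ for a compact set $K$. Concretely, I would cover $X$ by products of small intervals in the first $N$ coordinates, with $N$ chosen so that the tail $\sum_{i>N}a_i^2$ is below $(\frac{1}{2n})^2$. Counting covering sets then reduces to a finite-dimensional grid count, and this should yield $\lambda(K) = \lim_n \mu(\overline{K(K,\frac{1}{n})}) = \mu(K)$, by countable additivity and closedness of $K$. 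Since both $\mu_{BM}$ and $\mu$ are regular Borel measures, the formula $\mu_{BM}(B)=\sup\{\lambda(F)\}$ from the proof of Theorem 5.2 then identifies $\mu_{BM}$ with $\mu$ on all Borel sets.

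Step (1) then comes almost for free once $\lambda=\mu$ on closed sets. For closed $F$,
$$sing(F)=\lim_n \mu(\overline{K(F,\tfrac1n)})-\mu(F)=\mu\Big(\bigcap_n \overline{K(F,\tfrac1n)}\Big)-\mu(F)=0,$$
since this decreasing intersection equals $F$. Alternatively, Theorem 5.1 packages exactly this equivalence: agreement of $\mu_{BM}$ with $\lambda$ on closed sets is equivalent to regularity.

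The main obstacle is the entropy comparison in step (2): showing that the ultrafilter limit of the entropy ratios really equals the Lebesgue measure. The difficulty is that coverings by sets of $d_a$-diameter less than $\frac{1}{n}$ need not be cylinder-like, so I need both inequalities.
\begin{itemize}
\item \emph{Upper bound on $E(K,\cdot)$.} Use grid cylinders of appropriate side lengths in the first $N$ coordinates.
\item \emph{Lower bound on $E(X,\cdot)$.} Use a volume argument: any set of small $d_a$-diameter has product measure at most that of a suitable $d_a$-ball. By homogeneity of the cube under coordinate translations restricted to subcubes, all such balls have comparable measure, up to boundary effects that vanish as $n\to\infty$.
\end{itemize}
If this direct comparison proves too delicate, I would instead argue that $\lambda$ and $\mu$ are both invariant under the congruences given by coordinatewise reflections and translations of dyadic subcubes. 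Both are normalized, so they agree on the algebra generated by dyadic cylinders, hence on closed sets by outer approximation.
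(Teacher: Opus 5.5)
Your argument depends entirely on step (2), the equality $\lambda(K)=\mu(K)$ for every closed $K\subset I^{\omega}$, and that is what is not proved. This equality is not an intermediate lemma but the whole theorem: once it holds, invariance of $\mu$ follows directly from property (e) of $\lambda$ and inner regularity of $\mu$, without Theorem 5.2.

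One half is cheap, essentially via your fallback:
\begin{itemize}
\item Reflections $x_i\mapsto 1-x_i$ and one-coordinate translations of slabs $\{x_i\in[c,c+\delta]\}$ are congruences.
\item About $1/(2\delta)$ pairwise disjoint translates of such a slab fit into $I^{\omega}$, so by (a), (b), (d), (e) a slab has content $O(\delta)$.
\item This handles the overlapping faces. Recall that $\lambda$ is defined only on compact sets and is additive only on disjoint ones, so there is no ``algebra generated by dyadic cylinders'' on which to compare.
\item It yields $\lambda=\mu$ on finite unions of closed dyadic boxes, hence $\lambda(K)\le\mu(K)$ for all compact $K$. It also gives $\mu_{BM}=\mu$, because $\mu_0(U)=\mu(U)$ for open $U$.
\end{itemize}
So identifying the measure is not the issue. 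The issue is the reverse inequality $\lambda(F)\ge\mu(F)$. This is in doubt only for closed $F$ with $\mu(\partial F)>0$, for instance nowhere dense closed sets of positive product measure.

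Outer approximation cannot give this inequality. From $B_k\downarrow F$ and $\lambda(B_k)=\mu(B_k)$ you get $\lambda(F)=\mu(F)$ only if $\lambda$ is continuous along decreasing sequences of closed sets, and that continuity is exactly what $sing$ measures. Indeed, the easy half gives $\mu(F)\le\lim_n\lambda(\overline{K(F,\frac1n)})\le\mu(F)$, so $sing(F)=\mu(F)-\lambda(F)$. The missing inequality \emph{is} the regularity of $(I^{\omega},d_a)$. Deriving (1) from (2) therefore gains nothing, since (2) on closed sets is (1) in disguise. Your fallback's step ``hence on closed sets by outer approximation'' silently assumes the very regularity it is meant to produce.

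Your entropy sketch does not supply the missing inequality either. As $n\to\infty$, the number $N$ of coordinates that matter at scale $\frac1n$ tends to infinity. In those coordinates, a grid cell of $d_a$-diameter $\frac1n$ has much smaller product measure than a $d_a$-ball of the same diameter; the ratio is unbounded in $N$, the usual cube-versus-ball discrepancy in high dimension. So bounding $E(K,\frac1n)$ by grid counts and $E(X,\frac1n)$ by volume, or the other way round, gives bounds on $E(K,\frac1n)/E(X,\frac1n)$ that differ by an unbounded factor and say nothing about the limit. What you would need is that near-optimal $\frac1n$-covers of an arbitrary closed $K$ are asymptotically no more efficient than those of $X$. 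That is a genuinely infinite-dimensional covering statement, and the proposal leaves it as an expectation.

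For comparison, the paper never compares entropies in $I^{\omega}$. It asserts regularity of the finite-dimensional cubes, transfers it to $(I^{\omega},d_a)$ through the projection identity (12), applies Theorem 5.2, and identifies the resulting measure with the product measure by the Kolmogorov extension theorem. If you try that route, note that (12) is only a set identity. The transfer still requires relating the content of $I^{\omega}$ to those of the cubes $I^m$ and controlling it along a decreasing intersection, which is the same continuity issue.
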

								
								\emph{Proof.} First, we show that the compact metric space $(I^{\omega}, d_a)$ is regular. So consider a closed subset
								 $F$ of $I^{\omega}$ and the closed ball at $F$, $\overline{K(F, \frac{1}{n})}$, where $n$ is a positive integer.
								 Observe now that the projection $p_m$ of $I^{\omega}$ on the first $m$ coordinates satisfies
								  $p_m(\overline{K(F, \frac{1}{n})}) = \overline{K^{m}(p_m(F), \frac{1}{n})}$, where $\overline{K^{m}(p_m(F), \frac{1}{n})}$
									is a ball at $p_m(F)$ included in the $m$-dimensional Euclidean cube $I^{m}$. Moreover, we have the following:
									
					$$ (12) \ \ \ \ \ \ \overline{K(F, \frac{1}{n})} = \bigcap_{m=1}^{\infty} p^{-1}_{m}(\overline{K^{m}(p_m(F), \frac{1}{n})}).$$
					
					 Now, we observe that for every positive integer $n$, any closed\\
                     $n$-dimensional cube $Q$ is a regular compact metric space.
					 Indeed, since $\lambda (K) \leq \mu(K)$ for any closed subset $K$ of $Q$, where $\lambda$ is the Banach-Mycielski content
					and $\mu$ is the Lebesgue measure (which is just Banach-Mycielski measure, see [10], Thm. 1) 
					 defined in $Q$, then the positive singularity of $K$ would imply that $\mu$ is not outer regular.
					  Therefore, by $(12)$, we get that $(I^{\omega}, d_a)$ is regular. Then we can apply Theorem 5.2 to get a countably additive
						congruence-invariant Borel measure in $I^{\omega}$. Now, it is enough to apply the Kolmogoroff extension theorem to get that
						 $m$ is the Lebesgue product measure. $\Box$

\author{Grzegorz Tomkowicz\\
       $\quad$ Centrum Edukacji $G^2$\\
        ul.Moniuszki 9\\
        41-902 Bytom\\
        Poland\\
      e-mail: gtomko@vp.pl}

\end{document}